\theoremstyle{plain}
\newtheorem{thm}{Theorem}[section]
\newtheorem*{thm*}{Theorem}
\newtheorem{prop}[thm]{Proposition}
\newtheorem{lem}[thm]{Lemma}
\theoremstyle{definition}
\theoremstyle{remark}
\newtheorem{rem}[thm]{Remark}
\renewcommand{\epsilon}{\varepsilon}
\newcommand{\lip}{\operatorname{lip}}
\newcommand{\mm}{{\mathfrak m}}
\newcommand{\eps}{\varepsilon}
\newcommand{\restr}[1]{\lower3pt\hbox{$|_{#1}$}}
\newcommand{\kse}{{\sf ks}}
\newcommand{\E}{{\sf E}}
\newcommand{\lims}{\varlimsup}
\newcommand{\limi}{\varliminf}
\newcommand{\ks}{{\sf KS}}
\newcommand{\Grad}{\operatorname{Grad}}
\newcommand{\X}{{\rm X}}
\newcommand{\Y}{{\rm Y}}
\newcommand{\B}{{\rm B}}
\newcommand{\sfd}{{\sf d}}
\newcommand{\dfr}{{\mathfrak d}}
\newcommand{\G}{{\sf G}}
\newcommand{\CAT}{\operatorname{CAT}}
\newcommand{\e}{{\rm e}}
\renewcommand{\d}{{\rm d}}
\renewcommand{\o}{{\rm o}}
\newcommand{\RCD}{\operatorname{RCD}}
\newcommand{\SMCPBG}{\operatorname{SMCPBG}}
\newcommand{\Cub}{\operatorname{Cub}}
\DeclareMathOperator*{\aplims}{{\rm ap\,-}\lims}
\DeclareMathOperator{\aplip}{{\rm ap\,-lip}}
\newcommand{\md}{{\sf md}}
\newcommand{\sn}{{\sf sn}}
\newcommand{\D}{{\sf D}}
\newcommand{\nn}{{\mathfrak n}}
\title[Dirichlet problem for harmonic maps]{Dirichlet problem for harmonic maps from strongly rectifiable spaces into regular balls in $\CAT(1)$ spaces}
\author{Yohei Sakurai}
\address{Department of Mathematics, Saitama University, 255 Shimo-Okubo, Sakura-ku, Saitama-City, Saitama, 338-8570, Japan}
\email{ysakurai@rimath.saitama-u.ac.jp}
\subjclass[2020]{Primary 53E23; Secondly 58E20}
\keywords{Harmonic map; Korevaar-Schoen energy; Strongly rectifiable space; $\RCD$ space; $\CAT(1)$ space; Regular ball}
\date{August 12, 2023}
\begin{document}

\begin{abstract}
In this note,
we study the Dirichlet problem for harmonic maps from strongly rectifiable spaces into regular balls in $\CAT(1)$ space.
Under the setting,
we prove that
the Korevaar-Schoen energy admits a unique minimizer. 
\end{abstract}

\maketitle

\section{Introduction}

\subsection{$\CAT(0)$ target}\label{eq:cat0}

The study of harmonic maps between singular spaces is one of the central topics in geometric analysis since the pioneering work by Gromov-Schoen \cite{GS}.
In an earlier stage,
the theory has been developed by Korevaar-Schoen \cite{KS}, Jost \cite{J1}, \cite{J2} and Lin \cite{L},
independently.
Korevaar-Schoen \cite{KS} have introduced an energy for $L^2$-maps from a Riemannian domain into a complete metric space,
and derived its basic properties;
for instance,
the existence of energy density,
the consistency of Sobolev functions,
and its lower semicontinuity.
Furthermore,
they have proven that
if the target space is a $\CAT(0)$ space (i.e., non-positively curved space in the sense of A. D. Alexandrov),
then the energy admits a unique minimizer for the Dirichlet problem.

One of the research directions is to generalize the theory for non-smooth source spaces.
Such attempts have been done by Gregori \cite{G}, Eells-Fuglede \cite{EF}, Kuwae-Shioya \cite{KuS}, and so on.
Gregori \cite{G} and Eells-Fuglede \cite{EF} have dealt with a domain of a Lipschitz manifold and that of a Riemannian polyhedron,
respectively.
Kuwae-Shioya \cite{KuS} have examined a metric measure space satisfying the so-called strongly measure contraction property of Bishop-Gromov type, called $\SMCPBG$ space.
A typical example of $\SMCPBG$ space is an Alexandrov space,
which is a metric space equipped with the notion of a lower sectional curvature bound.

In recent years,
the theory of metric measure spaces with a lower Ricci curvature bound has been vastly developed.
In the literature,
the main research object is the so-called $\RCD(K,N)$ space,
which has been introduced by Ambrosio-Gigli-Savar\'{e} \cite{AGS} and Gigli \cite{G00}.
It is natural to ask whether the Korevaar-Schoen theory can be extended to $\RCD(K,N)$ spaces.
Here we notice that
an $\RCD(K,N)$ space is not necessarily a $\SMCPBG$ space (especially, in the collapsed case) since the Bishop type inequality is required in the $\SMCPBG$ condition,
and hence the framework of Kuwae-Shioya \cite{KuS} does not cover $\RCD(K,N)$ spaces.
Gigli-Tyulenev \cite{GT2} were able to work in a more general context,
covering the $\RCD$ setting.
In their framework,
source spaces are assumed to be locally uniformly doubling, strongly rectifiable and to satisfy a Poincar\'e inequality (more precisely, see Subsection \ref{sec:PI}).
Under this setting,
they have defined a Korevaar-Schoen type energy and deduced some fundamental results such as the existence of energy density and its lower semicontinuity.
They have concluded the solvability of the Dirichlet problem when the target space is a $\CAT(0)$ space (see \cite[Theorem 6.4]{GT2}).
\begin{thm}[\cite{GT2}]\label{thm:premain}
Let $(\X,\sfd,\mm)$ be a locally uniformly doubling, strongly rectifiable space satisfying a Poincar\'e inequality,
and let $\Omega$ be a bounded open subset of $\X$ with $\mm(\X\setminus \Omega)>0$.
Let $\Y_\o=(\Y,\sfd_\Y,\o)$ be a pointed $\CAT(0)$ space.
Let $\bar{u}\in \ks^{1,2}(\Omega,\Y_\o)$ be a Korevaar-Schoen type Sobolev map which determines a boundary value.
Then the Korevaar-Schoen type energy $\E^{\Omega}_{2,\bar{u}}:L^2(\Omega,\Y_\o)\to [0,+\infty]$ admits a unique minimizer.
\end{thm}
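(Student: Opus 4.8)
The plan is to carry out the direct method in the calculus of variations, following the Korevaar--Schoen scheme for non-positively curved targets and replacing its two Riemannian ingredients by their counterparts in the Gigli--Tyulenev framework: the lower semicontinuity of $\E^\Omega_{2,\bar u}$ with respect to $L^2$-convergence (already available from the results recalled above) and a quantitative convexity of the Korevaar--Schoen energy along geodesic interpolations of maps, which is exactly where the $\CAT(0)$ assumption on $\Y_\o$ is exploited. A third, more elementary, ingredient is needed to turn the convexity estimate into compactness: since $\X$ supports a Poincar\'e inequality, $\Omega$ is bounded, and $\mm(\X\setminus\Omega)>0$, one has at one's disposal a Friedrichs-type inequality $\|f\|_{L^2(\Omega)}^2\le C\,\E^\Omega_2(f)$ valid for every real-valued $f\in W^{1,2}(\Omega)$ with vanishing trace on $\X\setminus\Omega$; this is the one place where the hypothesis $\mm(\X\setminus\Omega)>0$ is used, via a covering/compactness argument that reduces it to the local Poincar\'e inequality.

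First I would establish the convexity estimate. Given $u,v\in L^2(\Omega,\Y_\o)$ with $\E^\Omega_{2,\bar u}(u),\E^\Omega_{2,\bar u}(v)<\infty$, let $w$ be the midpoint map, $w(x):=$ the midpoint of $u(x)$ and $v(x)$ in $\Y$ (unique by $\CAT(0)$). Convexity of the distance function along geodesics in a $\CAT(0)$ space gives the pointwise bound $\sfd_\Y(w(x),\bar u(x))\le\tfrac12\sfd_\Y(u(x),\bar u(x))+\tfrac12\sfd_\Y(v(x),\bar u(x))$, so $w$ has the same boundary value as $\bar u$; and the $\CAT(0)$ quadrilateral inequality applied to the four points $u(x),u(y),v(x),v(y)$ reads
\begin{equation*}
\sfd_\Y(w(x),w(y))^2\le\tfrac12\sfd_\Y(u(x),u(y))^2+\tfrac12\sfd_\Y(v(x),v(y))^2-\tfrac14\bigl(\sfd_\Y(u(x),v(x))-\sfd_\Y(u(y),v(y))\bigr)^2.
\end{equation*}
Dividing by $\sfd_\X(x,y)^2$, averaging in $y$ over balls $B_\eps(x)$, integrating in $x$, and letting $\eps\to0$ through the Korevaar--Schoen construction, one obtains simultaneously that $w\in\ks^{1,2}(\Omega,\Y_\o)$, that the real-valued function $\sfd_\Y(u,v)\colon x\mapsto\sfd_\Y(u(x),v(x))$ lies in $W^{1,2}(\Omega)$ with vanishing trace on $\X\setminus\Omega$, and that
\begin{equation*}
2\,\E^\Omega_{2,\bar u}(w)+\tfrac12\,\E^\Omega_2\bigl(\sfd_\Y(u,v)\bigr)\le\E^\Omega_{2,\bar u}(u)+\E^\Omega_{2,\bar u}(v).
\end{equation*}

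Granting this, the theorem follows by a standard argument. Put $m:=\inf_{L^2(\Omega,\Y_\o)}\E^\Omega_{2,\bar u}$, which is finite because $\E^\Omega_{2,\bar u}(\bar u)<\infty$, and let $\{u_n\}$ be a minimizing sequence. Applying the estimate to $u_i$ and $u_j$, whose midpoint map $w_{ij}$ again satisfies $\E^\Omega_{2,\bar u}(w_{ij})\ge m$, yields
\begin{equation*}
\E^\Omega_2\bigl(\sfd_\Y(u_i,u_j)\bigr)\le 2\bigl(\E^\Omega_{2,\bar u}(u_i)+\E^\Omega_{2,\bar u}(u_j)-2m\bigr)\longrightarrow 0\quad(i,j\to\infty),
\end{equation*}
so the Friedrichs inequality forces $\|\sfd_\Y(u_i,u_j)\|_{L^2(\Omega)}\to0$; hence $\{u_n\}$ is a Cauchy sequence in $L^2(\Omega,\Y_\o)$ and converges to some $u_\infty$. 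Lower semicontinuity gives $\E^\Omega_{2,\bar u}(u_\infty)\le\liminf_n\E^\Omega_{2,\bar u}(u_n)=m$, so $u_\infty$ is a minimizer. Uniqueness is immediate from the same estimate: if $u$ and $v$ are both minimizers, then $\E^\Omega_2(\sfd_\Y(u,v))\le 2(m+m-2m)=0$, whence $\|\sfd_\Y(u,v)\|_{L^2(\Omega)}=0$ by the Friedrichs inequality, i.e. $u=v$ $\mm$-a.e.

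The main obstacle is the convexity estimate of the second paragraph: rigorously passing the pointwise $\CAT(0)$ quadrilateral inequality through the Korevaar--Schoen averaged difference quotients in the strongly rectifiable, non-smooth setting --- controlling the approximate energies as $\eps\to0$, showing that the midpoint map lands in $\ks^{1,2}(\Omega,\Y_\o)$ with the correct boundary value, and identifying $x\mapsto\sfd_\Y(u(x),v(x))$ as a $W^{1,2}(\Omega)$ function with zero trace which carries precisely the defect term. By comparison, the Friedrichs inequality, although it also requires proof, is fairly standard once the Poincar\'e inequality on $\X$ and $\mm(\X\setminus\Omega)>0$ are in hand.
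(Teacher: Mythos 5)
Your proposal is correct and takes essentially the same route as the paper's treatment of this statement: the paper quotes it from Gigli--Tyulenev \cite{GT2}, whose proof (mirrored by the paper's own Section 3 argument in the $\CAT(1)$ case) is exactly your scheme of midpoint convexity via the $\CAT(0)$ quadrilateral inequality, the Poincar\'e inequality under Dirichlet boundary conditions to make the minimizing sequence $L^2$-Cauchy, and lower semicontinuity. In the $\CAT(0)$ setting the extra devices of the paper's $\CAT(1)$ proof (cut-off sets, cubic error terms, pulling the midpoint toward the center, infinitesimal Hilbertianity) are indeed unnecessary, so your cleaner version is precisely the Korevaar--Schoen/Gigli--Tyulenev argument.
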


Thanks to Theorem \ref{thm:premain},
one can introduce the notion of harmonic map from an $\RCD(K,N)$ space to a $\CAT(0)$ space.
Very recently,
Gigli \cite{G} has shown a quantitative Lipschitz estimate for such harmonic maps,
and produced a Cheng type Liouville theorem (\cite{Che}) based on \cite{DGPS}, \cite{GN}, \cite{GT1} (see also \cite{ZZ}, \cite{ZZZ}).
Mondino-Semola \cite{MS} have also obtained a similar result, independently.

\subsection{$\CAT(1)$ target}\label{eq:cat1}

The purpose of this note is to yield an analogue of Theorem \ref{thm:premain} for the case where the target space is a regular ball in a $\CAT(1)$ space (i.e., geodesic ball whose radius is strictly less than $\pi/2$).
In the case where the source space is a Riemannian domain,
the solvability of the Dirichlet problem has been established by Serbinowski \cite{S}.
Similarly to the non-positively curved case,
the result in \cite{S} has been extended to non-smooth source spaces (see Eells-Fuglede \cite{EF}, Fuglede \cite{F2}, \cite{F3} for Riemannian polyhedra, and Huang-Zhang \cite{HZ} for Alexandrov spaces).
We now aim to generalize it for $\RCD(K,N)$ spaces.
Our main result is the following:
\begin{thm}\label{thm:main}
Let $(\X,\sfd,\mm)$ be an infinitesimally Hilbertian, locally uniformly doubling, strongly rectifiable space satisfying a Poincar\'e inequality,
and let $\Omega$ be a bounded open subset of $\X$ with $\mm(\X\setminus \Omega)>0$.
Let $\Y_{\o}=(\Y,\sfd_\Y,\o)$ be a pointed $\CAT(1)$ space,
and let $\bar{\B}_{\rho}(\o)$ be a regular ball $($i.e., a closed ball of radius $\rho \in (0,\pi/2)$ centered at $\o$$)$.
Let $\bar{u}\in \ks^{1,2}(\Omega,\bar{\B}_{\rho}(o))$ be a Korevaar-Schoen type Sobolev map which determines a boundary value.
Then the Korevaar-Schoen type energy $\E^{\Omega}_{2,\bar{u}}:L^2(\Omega,\bar{\B}_{\rho}(\o))\to [0,+\infty]$ admits a unique minimizer.
\end{thm}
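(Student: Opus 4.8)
The plan is to run the direct method of the calculus of variations, just as in the $\CAT(0)$ case underlying Theorem \ref{thm:premain}; the only genuinely new ingredient is that the uniform convexity of the energy now has to be extracted from the $\CAT(1)$ comparison inside the regular ball rather than from non‑positive curvature of the target. Set $E_0:=\inf\E^{\Omega}_{2,\bar u}$; since $\bar u$ is an admissible competitor, $E_0<\infty$. Fix a minimizing sequence $(u_n)\subset L^2(\Omega,\bar\B_{\rho}(\o))$ with $\E^{\Omega}_{2,\bar u}(u_n)\to E_0$; in particular $\sup_n\E^{\Omega}_{2,\bar u}(u_n)<\infty$, and each $u_n$ realizes the boundary value $\bar u$ (otherwise the energy would be $+\infty$). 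For indices $n,m$ define $u_{n,m}\colon\Omega\to\bar\B_{\rho}(\o)$ by letting $u_{n,m}(x)$ be the midpoint of the geodesic joining $u_n(x)$ to $u_m(x)$: this geodesic is unique, since $\sfd_\Y(u_n(x),u_m(x))\le 2\rho<\pi$, it stays in $\bar\B_{\rho}(\o)$ because a regular ball is geodesically convex, and $x\mapsto u_{n,m}(x)$ is measurable because the midpoint map is continuous; moreover $u_{n,m}=\bar u$ wherever $u_n=u_m=\bar u$, so $u_{n,m}$ again realizes the boundary value $\bar u$.

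The heart of the argument is a quantitative convexity estimate: there is $c=c(\rho)>0$, depending only on $\rho$, such that
\[
\E^{\Omega}_{2,\bar u}(u_{n,m})\ \le\ \tfrac12\,\E^{\Omega}_{2,\bar u}(u_n)+\tfrac12\,\E^{\Omega}_{2,\bar u}(u_m)-c(\rho)\int_\Omega\bigl|\nabla\,\sfd_\Y(u_n,u_m)\bigr|^2\,\d\mm .
\]
Here $\sfd_\Y(u_n,u_m)$ denotes the real‑valued function $x\mapsto\sfd_\Y(u_n(x),u_m(x))$, which lies in the Sobolev class over $\Omega$ because $u_n,u_m\in\ks^{1,2}$. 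In a $\CAT(0)$ target this is obtained by integrating the quadrilateral comparison inequality, $\sfd_\Y^2(P_{1/2},Q_{1/2})\le\tfrac12\sfd_\Y^2(P_0,Q_0)+\tfrac12\sfd_\Y^2(P_1,Q_1)-\tfrac14\bigl(\sfd_\Y(P_0,P_1)-\sfd_\Y(Q_0,Q_1)\bigr)^2$ for geodesic midpoints, against the Korevaar--Schoen approximate energy densities and letting the scale tend to $0$, using the convergence and semicontinuity properties of the energy from \cite{GT2}. In a regular ball of a $\CAT(1)$ space the squared distance is in general \emph{not} convex along geodesics (this already fails for radii in $(\pi/4,\pi/2)$), so the quadrilateral inequality must be replaced by its $\CAT(1)$ counterpart: one compares the relevant configuration with a comparison configuration in the model sphere $\mathbb{S}^2$ — legitimate since inside $\bar\B_{\rho}(\o)$ the triangles in play admit spherical comparison triangles — exploits that $\rho<\pi/2$ both convexifies $\bar\B_{\rho}(\o)$ and keeps the pertinent spherical quantities bounded away from degeneracy, so that a strictly positive modulus of convexity $c(\rho)$ survives, and absorbs the additional error terms intrinsic to the $\CAT(1)$ setting using the a priori bound $\sup_n\E^{\Omega}_{2,\bar u}(u_n)<\infty$ together with the infinitesimal Hilbertianity of $\X$ — which makes the Cheeger energy a quadratic form and so allows Young's inequality to be applied at the level of the associated Dirichlet form. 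This is the analogue, for strongly rectifiable source spaces, of Serbinowski's estimates \cite{S} (and of Huang--Zhang \cite{HZ} for Alexandrov sources), transported into the framework of \cite{GT2}.

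Granting the estimate, the conclusion follows the $\CAT(0)$ template. Since $u_{n,m}$ realizes the boundary value $\bar u$ one has $\E^{\Omega}_{2,\bar u}(u_{n,m})\ge E_0$, so the displayed inequality forces
\[
c(\rho)\int_\Omega\bigl|\nabla\,\sfd_\Y(u_n,u_m)\bigr|^2\,\d\mm\ \le\ \tfrac12\bigl(\E^{\Omega}_{2,\bar u}(u_n)-E_0\bigr)+\tfrac12\bigl(\E^{\Omega}_{2,\bar u}(u_m)-E_0\bigr)\ \longrightarrow\ 0\qquad(n,m\to\infty).
\]
The function $f_{n,m}:=\sfd_\Y(u_n,u_m)$ is nonnegative, belongs to the Sobolev class over $\Omega$, and vanishes on $\X\setminus\Omega$; since $\X$ is locally uniformly doubling, supports a Poincar\'e inequality and satisfies $\mm(\X\setminus\Omega)>0$, a Friedrichs‑type inequality yields $\int_\Omega f_{n,m}^2\,\d\mm\le C_\Omega\int_\Omega|\nabla f_{n,m}|^2\,\d\mm$, whence $\int_\Omega\sfd_\Y^2(u_n,u_m)\,\d\mm\to0$, i.e.\ $(u_n)$ is Cauchy in $L^2(\Omega,\bar\B_{\rho}(\o))$. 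This space is complete, so $u_n\to u_\infty$ for some $u_\infty$, which still realizes the boundary value $\bar u$; by lower semicontinuity of $\E^{\Omega}_{2,\bar u}$ (again from \cite{GT2}), $\E^{\Omega}_{2,\bar u}(u_\infty)\le\liminf_n\E^{\Omega}_{2,\bar u}(u_n)=E_0$, so $u_\infty$ is a minimizer. Uniqueness is the same computation run on two minimizers $u,u'$: their geodesic‑midpoint interpolation gives $\int_\Omega|\nabla\,\sfd_\Y(u,u')|^2\,\d\mm=0$, hence $\sfd_\Y(u,u')=0$ $\mm$‑a.e.\ on $\Omega$, i.e.\ $u=u'$ in $L^2(\Omega,\bar\B_{\rho}(\o))$.

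The main obstacle is the quantitative convexity estimate of the second step; everything else is a routine transcription of the $\CAT(0)$ argument. Two points make it delicate. First, one must interchange the $\CAT(1)$ quadrilateral comparison with the $r\to0$ limit defining the Korevaar--Schoen energy densities, which relies on the precise convergence and semicontinuity statements of \cite{GT2}. Second, and more seriously, since $\sfd_\Y^2$ fails to be convex in a ball of radius in $(\pi/4,\pi/2)$, the spherical comparison produces genuine error terms, and controlling them uniformly in $n,m$ is exactly where the hypothesis $\rho<\pi/2$ and the infinitesimal Hilbertianity of the source — which is absent from Theorem \ref{thm:premain} but needed here to make the source Dirichlet form bilinear, so that the errors can be swallowed into $c(\rho)\int_\Omega|\nabla\,\sfd_\Y(u_n,u_m)|^2\,\d\mm$ after Young's inequality — are used in an essential way.
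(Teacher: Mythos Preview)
Your overall architecture---direct method, minimizing sequence, show it is $L^2$-Cauchy via a convexity inequality plus the Friedrichs/Poincar\'e inequality, then invoke lower semicontinuity---matches the paper (and Serbinowski). The gap is in the convexity step itself: you use the geodesic \emph{midpoint} $u_{n,m}$ as competitor and assert
\[
\E^{\Omega}_{2,\bar u}(u_{n,m})\ \le\ \tfrac12\,\E^{\Omega}_{2,\bar u}(u_n)+\tfrac12\,\E^{\Omega}_{2,\bar u}(u_m)-c(\rho)\int_\Omega\bigl|\nabla\,\sfd_\Y(u_n,u_m)\bigr|^2\,\d\mm,
\]
but this is not what the $\CAT(1)$ quadrilateral comparison delivers. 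Integrating Serbinowski's Estimate~I (Lemma~\ref{lem:cos1} here) against the approximate energies and passing to the limit gives, pointwise $\mm$-a.e.,
\[
\cos^2\!\frac{\sfd}{2}\,\e_2^2[m]+\tfrac14\,\e_2^2[\sfd]\ \le\ \tfrac12\bigl(\e_2^2[u]+\e_2^2[v]\bigr),\qquad \sfd:=\sfd_\Y(u,v),\quad m:=\G^{u,v}_{1/2}
\]
(Lemma~\ref{lem:energy1}). The factor $\cos^2(\sfd/2)\le 1$ sits on the \emph{wrong} side: dividing by it inflates the average energy by $1/\cos^2(\sfd/2)\ge 1$, so after integration you only obtain $\E(m)\le \frac{1}{\cos^2\rho}\cdot\tfrac12(\E(u_n)+\E(u_m))-\tfrac14\int\e_2^2[\sfd]$, and plugging in $\E(m)\ge E_0$ leaves a residual $E_0\bigl(\tfrac{1}{\cos^2\rho}-1\bigr)$ that does \emph{not} vanish as $n,m\to\infty$. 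Your proposed cure---``absorb the error terms via Young's inequality and the a~priori energy bound''---does not touch this: the obstruction is a multiplicative loss on the midpoint energy itself, not an additive cross term, and the resulting error $\int\sfd_{n,m}^2\,\e_2^2[m_{n,m}]\,\d\mm$ cannot be controlled by $\int\e_2^2[\sfd_{n,m}]\,\d\mm$ without pointwise bounds on $\e_2[m_{n,m}]$ that are unavailable.

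The paper (following Serbinowski) resolves this by changing the competitor. Instead of the midpoint $m$, one uses $m_\eta:=\G^{m,\o}_\eta$, the midpoint pulled toward the center $\o$ along the geodesic, with $\eta$ chosen so that $\sin((1-\eta)\dfr)/\sin\dfr=\cos(\sfd/2)$, where $\dfr:=\sfd_\Y(m,\o)$. A second comparison estimate (Lemma~\ref{lem:cos2}/Lemma~\ref{lem:energy2}) bounds $\e_2^2[m_\eta]$ in terms of $\e_2^2[m]$ with precisely the factor $\sin^2((1-\eta)\dfr)/\sin^2\dfr=\cos^2(\sfd/2)$, which cancels the bad coefficient in Lemma~\ref{lem:energy1}. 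Combining the two (Proposition~\ref{prop:energy3}) yields
\[
\E^\Omega_2(m_\eta)+\cos^8\!\rho\,\E^\Omega_2\!\left(\frac{\tan(\sfd/2)}{\cos\dfr}\right)\ \le\ \tfrac12\E^\Omega_2(u)+\tfrac12\E^\Omega_2(v),
\]
and it is the energy of $\tan(\sfd_{n,m}/2)/\cos\dfr_{n,m}$---not of $\sfd_{n,m}$ itself---that is forced to zero and then fed into Poincar\'e. Infinitesimal Hilbertianity enters not to ``absorb errors'' in the midpoint estimate but in Lemma~\ref{lem:energy2} and in the algebraic identity \eqref{eq:energy32}, where the Leibniz and chain rules for $\nabla$ are needed. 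In short: the missing idea in your argument is Serbinowski's retraction-toward-the-center competitor; the plain midpoint does not satisfy the inequality you claim.
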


Theorem \ref{thm:main} enables us to introduce the notion of harmonic map from an $\RCD(K,N)$ space into a regular ball in a $\CAT(1)$ space.
When the source space is a Riemannian domain,
Zhang-Zhong-Zhu \cite{ZZZ} have obtained a quantitative Lipschitz estimate and a Choi type Liouville theorem (\cite{Cho}).

\section{Preliminaries}\label{sec:Preliminaries}
We say that
$(\X,\sfd,\mm)$ is a metric measure space
if $(\X,\sfd)$ is a complete separable metric space,
and $\mm$ is a non-negative Borel measure,
which is finite on bounded sets.
This section is devoted to preliminaries for metric measure spaces.

\subsection{Sobolev functions}\label{eq:Sovolevfct}
We briefly recall the non-smooth differential calculus on metric measure spaces.
The readers can refer to \cite{G0}, \cite{GP} for the details.

Let $(\X,\sfd,\mm)$ be a metric measure space and let $C([0,1],\X)$ be the set of all curves in $\X$ defined on $[0,1]$ with the uniform topology.
For $t\in[0,1]$ the \textit{evaluation map} $\e_t:C([0,1],\X)\to \X$ is defined as $\e_t(\gamma):=\gamma_t$.
A curve $\gamma\in C([0,1],\X)$ is said to be \textit{absolutely continuous} if there is $f\in L^1(0,1)$ such that
\begin{equation}\label{eq:abs}
\sfd(\gamma_t,\gamma_s)\leq \int_s^t f(r)\,\d r
\end{equation}
for all $t,s\in[0,1]$ with $s< t$.
For an absolutely continuous $\gamma$,
the minimal $f$ satisfying \eqref{eq:abs} in the a.e. sense is called the \textit{metric speed} and it is denoted by $|\dot\gamma_t|$.
A Borel probability measure $\pi$ on $C([0,1],\X)$ is said to be a \textit{test plan} if
\begin{equation*}
\int_0^1\int|\dot\gamma_t|^2\,d\pi(\gamma)\,\d t<+\infty,\quad (\e_t)_{\#}\pi\leq C\,\mm
\end{equation*}
for some $C>0$. 
The \textit{Sobolev class $S^2(\X)$} is the set of all Borel functions $f:\X\to \mathbb{R}$ such that
there exists a non-negative $G\in L^2(\mm)$ such that
\begin{equation}\label{eq:sobclass}
\int |f(\gamma_1)-f(\gamma_0)|\,\d\pi(\gamma) \leq \int \int_0^1G(\gamma_t)|\dot\gamma_t|\,\d t\,\d\pi(\gamma)
\end{equation}
for all test plans $\pi$.
For $f\in S^2(\X)$,
a non-negative function $G\in L^2(\mm)$ satisfying \eqref{eq:sobclass} is called a \textit{weak upper gradient},
and the minimal one in the $\mm$-a.e. sense is called the \textit{minimal weak upper gradient} and it is denoted by $|Df|$.
The space
\begin{equation*}
W^{1,2}(\X):=L^2(\mm)\cap S^2(\X)
\end{equation*}
equipped with the norm
\begin{equation*}
\Vert f \Vert_{W^{1,2}(\X)}^2:=\Vert f\Vert^2_{L^2(\mm)}+\Vert |Df| \Vert^2_{L^2(\mm)}
\end{equation*}
is called \textit{Sobolev space} and it can be proved that it is a Banach space.
%

It is well-known that
there exists a unique couple $(L^2(T^{*}\X),\d)$,
where $L^2(T^{*}\X)$ is an $L^2(\mm)$-normed $L^{\infty}(\mm)$-module and $\d:S^2(\X)\to L^2(T^{*}\X)$ is a linear operator such that the following hold (see e.g., \cite{G0}, \cite[Theorem 4.1.1]{GP}):
\begin{enumerate}\setlength{\itemsep}{+0.7mm}
\item $L^2(T^{*}\X)$ is generated by $\{\d f \mid  f\in S^2(\X)\}$;
\item for every $f\in S^2(\X)$, it holds that
\begin{equation*}
|\d f|=|Df|\quad \text{$\mm$-a.e.}.
\end{equation*}
\end{enumerate}
The space $L^2(T^{*}\X)$ and the operator $\d$ are called the \textit{cotangent module} and the \textit{differential},
respectively.
The \textit{tangent module} $L^2(T\X)$ is defined as the dual module of $L^2(T^{*}\X)$.
For $f\in S^2(\X)$,
let $\Grad(f)$ be the set of all $v\in L^2(T\X)$ such that
\begin{equation*}
\d f(v)=|\d f|^2=|v|^2\quad \text{$\mm$-a.e.}.
\end{equation*}
Note that $\Grad(f)$ is non-empty (see \cite[Remark 4.2.10]{GP}).
The following chain rule holds (see e.g., \cite[Theorem 4.2.15]{GP}):
\begin{thm}[\cite{GP}]\label{thm:gradchain}
Let $f \in S^2(\X)$ and $v\in \Grad(f)$.
If $\varphi:\mathbb{R}\to \mathbb{R}$ is Lipschitz,
then $\varphi' \circ f v\in \Grad (\varphi \circ f)$,
where $\varphi'\circ f$ is arbitrarily defined on the inverse image of the non-differentiability points of $\varphi$.
\end{thm}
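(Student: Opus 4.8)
The plan is to reduce the statement to the chain rule for the differential $\d$ together with the pointwise norm structure of the module $L^2(T^{*}\X)$, after which membership in $\Grad(\varphi\circ f)$ follows from a short $\mm$-a.e.\ computation. Write $g:=\varphi\circ f$ and $w:=(\varphi'\circ f)\,v$. First I would recall the standard chain rule for the differential (see \cite{GP}): for $f\in S^2(\X)$ and Lipschitz $\varphi$ one has $\varphi\circ f\in S^2(\X)$ and
\[
\d(\varphi\circ f)=(\varphi'\circ f)\,\d f\quad\text{$\mm$-a.e.},
\]
where $\varphi'\circ f$ is defined arbitrarily on $f^{-1}(N)$, with $N\subset\mathbb{R}$ the $\mathcal{L}^1$-null set of non-differentiability points of $\varphi$. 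Its proof would enter only in outline: the identity holds for $\varphi\in C^1$ by the Leibniz rule and the locality of $\d$, and extends to Lipschitz $\varphi$ by mollification, relying on the locality property that $\d f=0$ $\mm$-a.e.\ on $f^{-1}(N')$ whenever $\mathcal{L}^1(N')=0$.

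That same locality property disposes of the ambiguity in the definition of $\varphi'\circ f$. Indeed, since $v\in\Grad(f)$ we have $|v|=|\d f|$ $\mm$-a.e.; as $\d f=0$ $\mm$-a.e.\ on $f^{-1}(N)$, it follows that $v=0$ $\mm$-a.e.\ on $f^{-1}(N)$, whence $w=(\varphi'\circ f)\,v=0$ $\mm$-a.e.\ there, independently of the values assigned to $\varphi'$ on $N$. Thus $w$ is a well-defined element of $L^2(T\X)$, its norm being controlled by $\Lip(\varphi)\,|v|$.

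It then remains to verify the two defining conditions of $\Grad(g)$. Applying the module norm identity $|(\varphi'\circ f)\,\d f|=|\varphi'\circ f|\,|\d f|$ to the differential chain rule yields $|\d g|=|\varphi'\circ f|\,|\d f|$ $\mm$-a.e. Using $v\in\Grad(f)$, that is $\d f(v)=|\d f|^2=|v|^2$, I compute $\mm$-a.e.
\[
\d g(w)=(\varphi'\circ f)\,\d f\bigl((\varphi'\circ f)\,v\bigr)=(\varphi'\circ f)^2\,\d f(v)=(\varphi'\circ f)^2\,|\d f|^2,
\]
while
\[
|w|^2=|\varphi'\circ f|^2\,|v|^2=(\varphi'\circ f)^2\,|\d f|^2,\qquad |\d g|^2=(\varphi'\circ f)^2\,|\d f|^2 .
\]
Hence $\d g(w)=|\d g|^2=|w|^2$ $\mm$-a.e., which is exactly the condition $w\in\Grad(g)$.

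The algebra above is harmless; the substantive input is entirely the differential chain rule for merely Lipschitz $\varphi$ and its accompanying locality, which is where the non-differentiability set must be controlled and hence the main obstacle lies. Once these standard facts about $(L^2(T^{*}\X),\d)$ are granted, the gradient chain rule is the short verification just carried out, the only genuinely new observation being that the defining relation $|v|=|\d f|$ for $v\in\Grad(f)$ forces $v$ to vanish on $f^{-1}(N)$ and thereby renders the arbitrary extension of $\varphi'$ immaterial.
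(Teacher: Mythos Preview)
The paper does not give its own proof of this theorem: it is simply cited from \cite[Theorem 4.2.15]{GP}. Your argument is correct and is, in fact, the standard route taken there---reduce to the chain rule $\d(\varphi\circ f)=(\varphi'\circ f)\,\d f$ for the differential, use locality to handle the non-differentiability set, and then verify the $\Grad$ condition by a direct pointwise computation.
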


A metric measure space $(\X,\sfd,\mm)$ is said to be \textit{infinitesimally strictly convex} when for every $f\in S^2(\X)$ the set $\Grad(f)$ is a singleton;
in this case,
the unique element is denoted by $\nabla f$.
Also,
$(\X,\sfd,\mm)$ is said to be \textit{infinitesimally Hilbertian} if $W^{1,2}(\X)$ is a Hilbert space.

We have the following characterization (see e.g., \cite[Theorem 4.3.3]{GP}):
\begin{thm}[\cite{GP}]\label{thm:IH}
The following are equivalent:
\begin{enumerate}\setlength{\itemsep}{+0.7mm}
\item $(\X,\sfd,\mm)$ is infinitesimally Hilbertian;
\item $L^2(T^{*}\X)$ and $L^2(T\X)$ are Hilbert modules;
\item $(\X,\sfd,\mm)$ is infinitesimally strictly convex,
and for all $f,g\in W^{1,2}(\X)$,
\begin{equation*}
\nabla (f+g)=\nabla f+\nabla g \quad \text{$\mm$-a.e.};
\end{equation*}
\item $(\X,\sfd,\mm)$ is infinitesimally strictly convex,
and for all $f,g\in W^{1,2}(\X) \cap L^{\infty}(\mm)$,
\begin{equation*}
\nabla (f g)=f \nabla g+g \nabla f \quad \text{$\mm$-a.e.}.
\end{equation*}
\end{enumerate}
\end{thm}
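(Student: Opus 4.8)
The plan is to make statement (2) the hub and prove (1) $\Leftrightarrow$ (2), (2) $\Leftrightarrow$ (3), and (2) $\Leftrightarrow$ (4). First I would observe that, since $L^2(T\X)$ is by definition the dual module of $L^2(T^{*}\X)$, the two assertions packaged in (2) are equivalent: the dual of a Hilbert module is again a Hilbert module, while conversely $L^2(T^{*}\X)$ embeds isometrically into its double dual $(L^2(T\X))^{*}$, and the pointwise parallelogram identity passes to submodules. Thus (2) reduces to the single statement that $L^2(T^{*}\X)$ is a Hilbert module, i.e.\ to the pointwise parallelogram identity $|\d f+\d g|^2+|\d f-\d g|^2=2|\d f|^2+2|\d g|^2$ holding $\mm$-a.e.\ for all $f,g\in S^2(\X)$. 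Here I would use that $L^2(T^{*}\X)$ is generated by $\{\d f\}$: verifying the identity on differentials and then invoking locality and density upgrades it to arbitrary module elements.

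For (1) $\Leftrightarrow$ (2), recall that $W^{1,2}(\X)$ is a Hilbert space iff its norm obeys the parallelogram law; since the $L^2(\mm)$-part always does, this amounts to the \emph{integrated} identity $\int |D(f+g)|^2+|D(f-g)|^2\,\d\mm = 2\int |Df|^2+|Dg|^2\,\d\mm$. The technical heart, which I take from the calculus developed in the cited references, is the \emph{pointwise} parallelogram inequality for minimal weak upper gradients, $|D(f+g)|^2+|D(f-g)|^2\le 2|Df|^2+2|Dg|^2$ $\mm$-a.e. Given this, the implication (2) $\Rightarrow$ (1) is immediate by integration; for (1) $\Rightarrow$ (2), the integrated equality forced by (1), together with the pointwise $\le$, upgrades to pointwise equality $\mm$-a.e., which by $|\d f|=|Df|$ and linearity of $\d$ is exactly the module parallelogram identity. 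I expect (1) $\Rightarrow$ (2) to be the main obstacle, precisely because it rests on the nontrivial pointwise inequality and on the passage from an integral identity to an a.e.\ pointwise one.

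For (2) $\Rightarrow$ (3) and (2) $\Rightarrow$ (4): when $L^2(T^{*}\X)$ is Hilbert, the self-duality (Riesz representation for Hilbert modules) produces for each $f$ a unique $v\in L^2(T\X)$ with $\d f(\cdot)=\langle v,\cdot\rangle$ and $|v|=|\d f|$; then $\d f(v)=|v|^2=|\d f|^2$ shows $v\in\Grad(f)$, and the equality case of the pointwise Cauchy--Schwarz inequality shows any element of $\Grad(f)$ coincides with it, so $\Grad(f)=\{\nabla f\}$ and the space is infinitesimally strictly convex. Linearity of $\d$ and of the Riesz map then gives $\nabla(f+g)=\nabla f+\nabla g$, hence (3); combining instead with the Leibniz rule $\d(fg)=f\,\d g+g\,\d f$ for $f,g\in W^{1,2}(\X)\cap L^{\infty}(\mm)$ gives $\nabla(fg)=f\nabla g+g\nabla f$, hence (4).

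For the converses (3) $\Rightarrow$ (2) and (4) $\Rightarrow$ (2): under infinitesimal strict convexity $\nabla f$ is single-valued, and the chain rule (Theorem \ref{thm:gradchain}) yields $\nabla(-g)=-\nabla g$ and $\nabla(h^2)=2h\nabla h$. Assuming (3), I expand $|\d(f\pm g)|^2=(\d f\pm\d g)(\nabla f\pm\nabla g)=|\d f|^2+|\d g|^2\pm\bigl(\d f(\nabla g)+\d g(\nabla f)\bigr)$ using additivity and $\d f(\nabla f)=|\d f|^2$; adding the two relations cancels the cross terms and delivers exactly the pointwise parallelogram identity, which is (2). Assuming (4) instead, I would first recover additivity of $\nabla$ from the Leibniz rule and the chain rule via the polarization $fg=\tfrac14\bigl[(f+g)^2-(f-g)^2\bigr]$, applied to bounded truncations and extended to general $f,g$ by locality, thereby reducing (4) to the case just treated. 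This closes all the equivalences.
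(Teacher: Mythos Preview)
The paper does not supply its own proof of this theorem: it is quoted verbatim from \cite[Theorem~4.3.3]{GP} as background material, with no argument given. There is therefore nothing in the paper to compare your proposal against.

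That said, your sketch is essentially the standard argument one finds in the cited reference, and the overall architecture is sound: the pointwise parallelogram inequality $|D(f+g)|^2+|D(f-g)|^2\le 2|Df|^2+2|Dg|^2$ is indeed the key nontrivial input for (1)$\Rightarrow$(2), the Riesz-type identification in a Hilbert module is the right way to extract a single-valued linear gradient for (2)$\Rightarrow$(3),(4), and your computation for (3)$\Rightarrow$(2) is correct. The only place I would flag as thin is (4)$\Rightarrow$(2): recovering additivity of $\nabla$ from the Leibniz rule via the polarization $fg=\tfrac14[(f+g)^2-(f-g)^2]$ requires you to already know $\nabla(h^2)=2h\nabla h$ and to handle the passage from bounded truncations back to general $f,g$ carefully; this works, but it is where an actual write-up would need the most detail.
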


In the case where $(\X,\sfd,\mm)$ is infinitesimally Hilbertian,
we denote by $\langle \cdot,\cdot \rangle$ the pointwise scalar product on $L^2(T\X)$.

\subsection{Doubling, Poincar\'e and strong rectifiability}\label{sec:PI}
Here we review a volume doubling property, a Poincar\'e inequality and the strongly rectifiability on metric measure spaces.

Let $(\X,\sfd,\mm)$ be a metric measure space.
For $r>0$ and $x\in \X$,
we denote by $B_r(x)$ the open ball of radius $r$ centered at $x$.
A metric measure space $(\X,\sfd,\mm)$ is said to be \textit{locally uniformly doubling} if for every $R>0$ there exists $C_R>0$ such that
\begin{equation*}
\mm(B_{2r}(x)) \leq C_R \,\mm(B_r(x))
\end{equation*}
for all $x\in \X$ and $r\in(0,R)$.
We also say that
$(\X,\sfd,\mm)$ satisfies a \textit{Poincar\'e inequality} if for every $R>0$ there are $C_R,\lambda=\lambda_R>0$ such that
\begin{equation*}
\fint_{B_r(x)}|f-f_{B_r(x)}|\,\d\mm \leq C_R\,r\fint_{ B_{\lambda r}(x)} \lip (f)\, \d\mm
\end{equation*}
holds for every Lipschitz function $f:\X\to \mathbb{R}$ and for all $x\in \X$ and $r\in(0,R)$,
where $f_{B_r(x)}:=\fint_{B_r(x)} f\,\d\mm$ and $\lip(f)$ is the pointwise Lipschitz constant.

We say that
a metric measure space $(\X,\sfd,\mm)$ is a \textit{$d$-dimensional strongly rectifiable space} if
for every $\eps>0$ there exists an \textit{$\eps$-atlas} $\mathcal A^\eps:=\{(U^\eps_i,\varphi^\eps_i)\}_{i}$ such that the following hold:
\begin{enumerate}\setlength{\itemsep}{+0.7mm}
\item Borel subsets $\{U^\eps_i\}_i$ forms a partition of $\X$ up to an $\mm$-negligible set;
\item $\varphi^\eps_i$ is a $(1+\eps)$-bi-Lipschitz map from $U^\eps_i$ to $\varphi^\eps_i(U^\eps_i)\subset \mathbb{R}^d$;
\item for some $c_i>0$, we have
\begin{equation*}
c_i\,\mathcal L^d|_{\varphi^\eps_i(U^\eps_i)}\leq (\varphi^\eps_i)_{\#}\mm|_{U^\eps_i}\leq (1+\eps)c_i\,\mathcal L^d|_{\varphi^\eps_i(U^\eps_i)}.
\end{equation*}
\end{enumerate}

\begin{rem}
For $K\in \mathbb{R}$ and $N\in [1,\infty)$,
every $\RCD(K,N)$ space is a locally uniformly doubling, strongly rectifiable space satisfying a Poincar\'e inequality.
Actually,
the doubling property is due to \cite{LV}, \cite{St1}, \cite{St2},
the Poincar\'e inequality is due to \cite{R}.
Moreover,
as stated in \cite[Theorem 2.19]{GT2},
the strong rectifiability is a consequence of the results by Bru\'{e}-Semola \cite{BS}, Gigli-Pasqualetto \cite{GP2}, Kell-Mondino \cite{KM}, Mondino-Naber \cite{MN}, De Philippis-Marchese-Rindler \cite{DMR}.
\end{rem}

\begin{rem}
Let $(\X,\sfd,\mm)$ be a strongly rectifiable space,
and let $\{\eps_n\}_n$ be a sequence with $\eps_n\downarrow0$.
A family $\{\mathcal A^{\eps_n}\}_{n}$ of atlases is said to be \textit{aligned} if
for all $n,m$ and $(U^{\eps_n}_i,\varphi^{\eps_n}_i)\in\mathcal A^{\eps_n}$, $(U^{\eps_m}_j,\varphi^{\eps_m}_j)\in\mathcal A^{\eps_m}$,
the map $\varphi^{\eps_n}_i-\varphi^{\eps_m}_j$ is $(\eps_n+\eps_m)$-Lipschitz on $U^{\eps_n}_i\cap U^{\eps_m}_j$.
It has been observed in \cite[Theorem 3.9]{GP3}, \cite[Subsection 2.4]{GT2} that
for any sequence $\{\eps_n\}_n$,
an aligned family of atlases $\{\mathcal A^{\eps_n}\}_{n}$ exists.
\end{rem}

\subsection{Approximate metric differentiability}\label{sec:approx}
We further review the notion of approximate metric differentiability of maps introduced by Kirchheim \cite{K} and Gigli-Tyulenev \cite{GT2}.

Let $(\X,\sfd,\mm)$ be a metric measure space,
and let $(\Y,\sfd_\Y)$ be a metric space.
Let $u:\X\to \Y$ be a Borel map.
The \textit{pointwise Lipschitz constant of $u$ at $x\in \X$} is defined as
\begin{equation*}
\lip (u)(x):=\varlimsup_{y\to x}\frac{\sfd_\Y(u(x),u(y))}{\sfd(x,y)}
\end{equation*}
if $x$ is not an isolated point,
and $\lip (u)(x):=0$ otherwise.
For a Borel subset $U\subset \X$,
a point $x\in U$ is said to be a \textit{density point of $U$} if
\begin{equation*}
\lim_{r\downarrow 0} \frac{\mm(B_r(x)\cap U)}{\mm(B_r(x))}=1.
\end{equation*}
The \textit{approximate Lipschitz constant} is defined by
\begin{equation*}
\aplip(u)(x):=\aplims_{y\to x}\frac{\sfd_\Y(u(x),u(y))}{\sfd(x,y)},
\end{equation*}
where the right hand side means the approximate upper limit (see e.g., \cite[Subsection 2.1]{GT2} for the precise definition).
We have the following (see \cite[Proposition 2.5]{GT2}):
\begin{prop}[\cite{GT2}]\label{prop:loclip}
Let $(\X,\sfd,\mm)$ be a uniformly locally doubling space,
and let $(\Y,\sfd_\Y)$ be a complete metric space.
Let $U$ be a Borel subset of $\X$,
and let $u:U\to \X$ be Lipschitz.
Then for every density point $x\in \X$ of $U$ we have
\begin{equation*}
\lip (u)(x)=\aplip(u)(x).
\end{equation*}
\end{prop}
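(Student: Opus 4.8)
The plan is to treat the two inequalities separately. The bound $\aplip(u)(x)\le\lip(u)(x)$ is automatic and carries no content: by definition the approximate upper limit only discards super-level sets of $\mm$-density zero at $x$, so it never exceeds the ordinary $\limsup$; concretely, if $t>\lip(u)(x)$ then the difference quotient $g(y):=\sfd_\Y(u(x),u(y))/\sfd(x,y)$ satisfies $g\le t$ on $U\cap B_r(x)$ for some $r>0$, so $\{z\in U:g(z)>t\}$ has $\mm$-density $0$ at $x$ and hence $\aplip(u)(x)\le t$. All the work goes into the reverse inequality $\lip(u)(x)\le\aplip(u)(x)$. First I would reduce to the case that $x$ is not isolated in $\X$ (the isolated case being trivial) and fix $L$ with $u$ being $L$-Lipschitz on $U$; since then $g\le L$ everywhere, both quantities lie in $[0,L]$.

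For the reverse inequality I would argue by contradiction. Assume $\lip(u)(x)>a:=\aplip(u)(x)$ and pick $b$ with $a<b\le\lip(u)(x)$. By the definition of $\lip(u)(x)$ there is a sequence $y_n\in U\setminus\{x\}$ with $r_n:=\sfd(x,y_n)\downarrow0$ and $\sfd_\Y(u(x),u(y_n))\ge b\,r_n$. The idea is to \emph{propagate} this large difference quotient from the single point $y_n$ to an entire small ball around $y_n$: using the $L$-Lipschitz bound and the triangle inequality, for a fixed $\eta\in(0,1)$ and any $z\in U\cap B_{\eta r_n}(y_n)$ one gets $\sfd_\Y(u(x),u(z))\ge(b-L\eta)\,r_n$ while $\sfd(x,z)\le(1+\eta)\,r_n$, hence $g(z)\ge(b-L\eta)/(1+\eta)$. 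Choosing $\eta$ small enough that this ratio exceeds some fixed $a'\in(a,b)$, we get the inclusion $U\cap B_{\eta r_n}(y_n)\subseteq E$, where $E:=\{z\in U:g(z)>a'\}$; and since $a'>a$, the set $E$ has $\mm$-density $0$ at $x$, directly from the definition of the approximate upper limit.

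The remaining point --- and the step I expect to be the main obstacle --- is to show that $U\cap B_{\eta r_n}(y_n)$ nonetheless carries a \emph{fixed positive fraction} of the mass of $B_{(1+\eta)r_n}(x)$ for all large $n$, which contradicts the density-zero property of $E$ (note $(1+\eta)r_n\downarrow0$). This is where the doubling hypothesis and the density-point hypothesis come in. Since $\sfd(x,y_n)=r_n$, one has the chain $B_{\eta r_n}(y_n)\subseteq B_{(1+\eta)r_n}(x)\subseteq B_{(2+\eta)r_n}(y_n)$, and the two balls centred at $y_n$ have radii differing only by the fixed factor $(2+\eta)/\eta$; local uniform doubling then supplies a constant $C\ge1$, independent of $n$ once $r_n$ is below the relevant scale, with $\mm(B_{(1+\eta)r_n}(x))\le C\,\mm(B_{\eta r_n}(y_n))$. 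Hence $\mm(B_{\eta r_n}(y_n)\setminus U)\le\mm(B_{(1+\eta)r_n}(x)\setminus U)$, and dividing by $\mm(B_{\eta r_n}(y_n))$ and using both the doubling comparison and the assumption that $x$ is a density point of $U$ shows that this relative defect tends to $0$; thus $\mm(B_{\eta r_n}(y_n)\cap U)\ge\tfrac12\,\mm(B_{\eta r_n}(y_n))\ge\tfrac1{2C}\,\mm(B_{(1+\eta)r_n}(x))$ for $n$ large. Since $B_{\eta r_n}(y_n)\cap U\subseteq E\cap B_{(1+\eta)r_n}(x)$, this is the desired contradiction and the proof closes. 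The only genuinely delicate bookkeeping is this transfer of the density-point property from balls centred at $x$ to balls centred at the moving points $y_n$, which is precisely what local doubling makes possible.
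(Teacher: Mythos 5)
Your proof is correct, but note that the paper itself gives no argument for this proposition: it is quoted from \cite{GT2} (Proposition 2.5 there), and your proof is essentially the one given in that reference. Namely, the trivial inequality $\aplip(u)(x)\le \lip(u)(x)$, followed by propagating a near-maximal difference quotient from the points $y_n$ to the balls $B_{\eta r_n}(y_n)$ via the Lipschitz bound, with the locally uniform doubling property and the density-point hypothesis guaranteeing that these balls carry a fixed positive fraction of the mass of $B_{(1+\eta)r_n}(x)$, which is exactly the mechanism used there (stated directly rather than by contradiction).
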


Let $(\X,\sfd,\mm)$ be a $d$-dimensional strongly rectifiable space,
and let $\{\mathcal A^{\eps_n}\}_n$ be an aligned family of atlases for a sequence $\{\eps_n\}_n$ with $\eps_n\downarrow0$.
Let $(\Y,\sfd_\Y)$ be a metric space.
We denote by $\sn^d$ the set of all semi-norms on $\mathbb{R}^d$ equipped with the complete separable metric
\begin{equation*}
\D(\nn_1,\nn_2):=\sup_{z}|\nn_1(z)-\nn_2(z)|,
\end{equation*}
where the supremum is taken over all $z\in \mathbb{R}^d$ with $|z|\leq 1$.
We write $\Vert \nn \Vert:=\D(\nn,0)$.
We say that
a Borel map $u:\X\to\Y$ is \textit{approximately metrically differentiable at $x\in\X$ relatively to $\{\mathcal A^{\eps_n}\}_n$} if the following hold:
\begin{enumerate}\setlength{\itemsep}{+0.7mm}
\item For every $n$, there exists $i=i_{x,n}$ such that
$x$ belongs to $U_{i}^{\eps_n}$, it is a density point of $U_{i}^{\eps_n}$ and $\varphi^{\eps_n}_{i}(x)$ is a density point of $\varphi^{\eps_n}_{i}(U^{\eps_n}_{i})$;
\item there exists $\md_x(u)\in \sn^d$, called the \textit{metric differential of $u$ at $x$}, such that
\begin{equation*}
\lims_{n\to\infty}\aplims_{y\to x,\,y\in U_{i}^{\eps_n}}\frac{\big|\sfd_\Y(u(y),u(x))-\md_x(u)(\varphi^{\eps_n}_{i}(y)-\varphi^{\eps_n}_{i}(x))\big|}{\sfd(y,x)}=0.
\end{equation*}
\end{enumerate}

Moreover,
Gigli-Tyulenev \cite{GT2} have shown the following (see \cite[Lemma 3.4]{GT2}):
\begin{lem}[\cite{GT2}]\label{lem:approx}
Let $(\X,\sfd,\mm)$ be a strongly rectifiable space,
and let $\{\mathcal A^{\eps_n}\}_n$ be an aligned family of atlases for a sequence $\{\eps_n\}_n$ with $\eps_n\downarrow0$.
Let $(\Y,\sfd_\Y)$ be a complete metric space.
If $u:\X\to\Y$ is approximately metrically differentiable at $x\in\X$ relatively to $\{\mathcal A^{\eps_n}\}_n$,
then
\begin{equation*}
\aplip(u)(x)=\Vert \md_x(u) \Vert.
\end{equation*}
\end{lem}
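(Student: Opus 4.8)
The plan is to trap $\aplip(u)(x)$ between $\Vert\md_x(u)\Vert$ from above and from below, in each case incurring errors that vanish as the chart index $n\to\infty$. Fix $n$ and abbreviate $i:=i_{x,n}$, $\varphi:=\varphi^{\eps_n}_i$, $U:=U^{\eps_n}_i$, and set
\begin{equation*}
a_n:=\aplims_{y\to x,\,y\in U}\frac{\big|\sfd_\Y(u(y),u(x))-\md_x(u)(\varphi(y)-\varphi(x))\big|}{\sfd(y,x)};
\end{equation*}
then $a_n\ge 0$ and, since $\lims_n a_n=0$ by the definition of approximate metric differentiability, in fact $\lim_n a_n=0$. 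Because $x$ is a density point of $U$, the set $\X\setminus U$ has $\mm$-density $0$ at $x$, so restricting $y$ to $U$ does not affect the approximate upper limit, whence
\begin{equation*}
\aplip(u)(x)=\aplims_{y\to x,\,y\in U}\frac{\sfd_\Y(u(y),u(x))}{\sfd(y,x)}.
\end{equation*}
Moreover, for every $\delta>0$ the set $E_\delta:=\{y\in U:\big|\sfd_\Y(u(y),u(x))-\md_x(u)(\varphi(y)-\varphi(x))\big|>(a_n+\delta)\sfd(y,x)\}$ has $\mm$-density $0$ at $x$, and, $\varphi$ being $(1+\eps_n)$-bi-Lipschitz, $(1+\eps_n)^{-1}\sfd(y,x)\le|\varphi(y)-\varphi(x)|\le(1+\eps_n)\sfd(y,x)$ for all $y\in U$.

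For the upper bound, for $y\in U\setminus E_\delta$ I would combine the last two facts with the elementary seminorm inequality $\md_x(u)(v)\le\Vert\md_x(u)\Vert\,|v|$:
\begin{equation*}
\frac{\sfd_\Y(u(y),u(x))}{\sfd(y,x)}\le\frac{\md_x(u)(\varphi(y)-\varphi(x))}{\sfd(y,x)}+a_n+\delta\le(1+\eps_n)\Vert\md_x(u)\Vert+a_n+\delta.
\end{equation*}
Since $E_\delta$ has $\mm$-density $0$ at $x$, the approximate upper limit defining $\aplip(u)(x)$ is bounded above by the right-hand side; letting $\delta\downarrow0$ and then $n\to\infty$ gives $\aplip(u)(x)\le\Vert\md_x(u)\Vert$.

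For the lower bound one may assume $\Vert\md_x(u)\Vert>0$. Given $\eta\in(0,\Vert\md_x(u)\Vert)$, continuity of the seminorm shows that $C:=\{v\in\mathbb{R}^d\setminus\{0\}:\md_x(u)(v)>(\Vert\md_x(u)\Vert-\eta)\,|v|\}$ is a nonempty open cone. The crux is the claim that $A:=\{y\in U:\varphi(y)-\varphi(x)\in C\}$ has positive upper $\mm$-density at $x$. To prove it, observe that, $C$ being a cone, every ball of $\mathbb{R}^d$ centred at $\varphi(x)$ meets the translated cone $\varphi(x)+C$ in a set whose Lebesgue measure is a fixed positive fraction of the ball's; since $\varphi(x)$ is a density point of $\varphi(U)$, the set $\varphi(U)\cap(\varphi(x)+C)$ therefore has positive upper $\mathcal L^d$-density at $\varphi(x)$. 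Transporting this back by $\varphi^{-1}$ (again $(1+\eps_n)$-bi-Lipschitz), using the comparison $c_i\,\mathcal L^d|_{\varphi(U)}\le\varphi_{\#}(\mm|_U)\le(1+\eps_n)c_i\,\mathcal L^d|_{\varphi(U)}$ from the definition of $d$-dimensional strong rectifiability, and using once more that $x$ is a density point of $U$, one produces a constant $\kappa=\kappa(\eps_n,d,C)>0$ with $\mm(A\cap B_{(1+\eps_n)r}(x))\ge\kappa\,\mm(B_{(1+\eps_n)r}(x))$ for all small $r$, which is the claim. Then for $y\in A\setminus E_\delta$ the bi-Lipschitz lower bound and the definition of $C$ give
\begin{equation*}
\frac{\sfd_\Y(u(y),u(x))}{\sfd(y,x)}\ge\frac{\md_x(u)(\varphi(y)-\varphi(x))}{(1+\eps_n)\,|\varphi(y)-\varphi(x)|}-a_n-\delta>\frac{\Vert\md_x(u)\Vert-\eta}{1+\eps_n}-a_n-\delta;
\end{equation*}
since $A\setminus E_\delta$ still has positive upper $\mm$-density at $x$ (as $E_\delta$ has density $0$), it follows that $\aplip(u)(x)\ge(\Vert\md_x(u)\Vert-\eta)(1+\eps_n)^{-1}-a_n-\delta$. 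Letting $\delta\downarrow0$, $\eta\downarrow0$ and $n\to\infty$ gives $\aplip(u)(x)\ge\Vert\md_x(u)\Vert$, which together with the upper bound proves the lemma.

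The step I expect to be the main obstacle is the density statement for $A$: one has to pass from an $\mm$-density estimate at $x\in\X$ to an $\mathcal L^d$-density estimate at $\varphi(x)\in\mathbb{R}^d$ and back, controlling the distortion produced both by the bi-Lipschitz constant $1+\eps_n$ of $\varphi$ and by the constants $c_i$ and $1+\eps_n$ in property (3) of the $\eps_n$-atlas, and also using that $U$ carries only asymptotically full (not all) of the $\mm$-mass of small balls around $x$. Everything else reduces to routine bookkeeping of approximate upper limits together with the inequality $\md_x(u)(v)\le\Vert\md_x(u)\Vert\,|v|$, valid for any seminorm.
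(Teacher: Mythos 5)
The paper does not prove this lemma at all: it is quoted from Gigli--Tyulenev \cite[Lemma 3.4]{GT2}, so there is no in-paper argument to compare against. Your blind proof is correct and follows the natural (and, in substance, the original) route: the upper bound comes from the error sets $E_\delta$ together with $\md_x(u)(v)\le\Vert\md_x(u)\Vert\,|v|$ and the $(1+\eps_n)$-bi-Lipschitz bound, while the lower bound uses the open cone $C$, whose chart preimage has positive upper $\mm$-density at $x$ by combining the Lebesgue-density of $\varphi^{\eps_n}_i(x)$ in $\varphi^{\eps_n}_i(U^{\eps_n}_i)$, the two-sided comparison $c_i\,\mathcal L^d\le(\varphi^{\eps_n}_i)_{\#}\mm\le(1+\eps_n)c_i\,\mathcal L^d$ from the atlas, and the $\mm$-density of $x$ in $U^{\eps_n}_i$; the bookkeeping you flag as the main obstacle does go through exactly as you outline, with all distortion factors tending to $1$ as $n\to\infty$.
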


Gigli-Tyulenev \cite{GT2} have further concluded the $\mm$-a.e. approximately metrical differentiability of Borel maps satisfying the Lusin-Lipschitz property (see \cite[Proposition 3.6]{GT2}).


\subsection{Korevaar-Schoen space}\label{sec:KS}
In this section
we recall the formulation and basic results concerning the Korevaar-Schoen type energy introduced in Gigli-Tyulenev \cite{GT2}.
Let $(\X,\sfd,\mm)$ be a metric measure space,
and let $\Omega$ be an open subset of $\X$.
Let $\Y_\o=(\Y,\sfd_\Y,\o)$ be a pointed complete metric space.

We denote by $L^0(\Omega,\Y)$ the set of all Borel maps (up to $\mm$-a.e. equality) from $\Omega$ to $\Y$ with separable range.
Let $L^2(\Omega,\Y_\o)$ be the set of all $u\in L^0(\Omega,\Y)$ such that
\begin{equation*}
\int_{\Omega} \sfd_{\Y}^2(u(x),\o)\,\d\mm(x)<+\infty,
\end{equation*}
which is endowed with the metric
\begin{equation*}
\sfd_{L^2}(u,v):=\left| \int_{\Omega} \sfd_\Y^2\big(u(x),v(x)\big)\,\d\mm(x)\right|^{1/2}.
\end{equation*}
Since $\Y$ is complete,
it can be proved that so is $L^2(\Omega,\Y_{\o})$.

Let $u\in L^2(\Omega,\Y_{\o})$.
For $r>0$ and $x\in \Omega$,
the \textit{approximate energy} is defined by
\begin{equation*}
\kse_{2,r}[u,\Omega](x):= \begin{cases}
                                  \displaystyle{\left|\fint_{B_r(x)} \frac{\sfd^2_\Y(u(x),u(y))}{r^2}\,\d \mm(y) \right|^{1/2}} & \textrm{if}\ B_{r}(x)\subset \Omega,\\
                                  0 & \textrm{otherwise}.
                                  \end{cases}
\end{equation*}
The \textit{Korevaar-Schoen space $\ks^{1,2}(\Omega,\Y_\o)$} is defined by the set of all $u\in L^2(\Omega,\Y_\o)$ such that
\begin{equation}\label{eq:defkso}
\E_2^\Omega(u):=\limi_{r\downarrow0}\,\int_\Omega \,\kse^2_{2,r}[u,\Omega]\,\d\mm<+\infty.
\end{equation}

\begin{rem}
Let $W^{1,2}(\Omega)$ denote the Sobolev space provided by means of \cite[Definition 5.2]{GT2}.
One can also introduce the associated Sobolev space $W^{1,2}(\Omega,\Y_{\o})$ as the set of all $u\in L^2(\Omega,\Y_{\o})$ such that
there is $G\in L^2(\Omega)$ such that for all $1$-Lipschitz functions $f:\Y\to\mathbb{R}$ we have $f\circ u\in W^{1,2}(\Omega)$ and $|D(f\circ u)|\leq G$ $\mm$-a.e.\ on $\Omega$ (see \cite[Definition 5.3]{GT2}). 
\end{rem}

We now present some basic properties of the Korevaar-Schoen space $\ks^{1,2}(\Omega,\Y_\o)$ (see \cite[Theorem 5.7]{GT2}):
\begin{thm}[\cite{GT2}]\label{thm:openenerygy}
Let $(\X,\sfd,\mm)$ be a  locally uniformly doubling, strongly rectifiable space satisfying a Poincar\'e inequality,
and let $\Omega$ be an open subset.
Let $\Y_\o=(\Y,\sfd_\Y,\o)$ be a pointed complete metric space.
Then the following hold:
\begin{enumerate}\setlength{\itemsep}{+0.7mm}
\item $\ks^{1,2}(\Omega,\Y_\o)=W^{1,2}(\Omega,\Y_\o)$;
\item for every $u\in \ks^{1,2}(\Omega,\Y_{\o})$
there exists a function $\e_2[u]\in L^2(\Omega)$, called the \textit{energy density}, such that
\begin{equation*}
\kse_{2,r}[u,\Omega] \to  \e_2[u]\quad\text{ $\mm$-a.e. on $\Omega$ and in $L^2(\Omega)$ as $r\downarrow0$}.
\end{equation*}
In particular $\liminf$ in \eqref{eq:defkso} is a limit and $\E_2^\Omega(u)$ can be written as
\begin{equation*}
\E_2^\Omega(u)= \begin{cases}
                                  \displaystyle{\int_\Omega \e_2^2[u]\, \d\mm} & \textrm{if}\ u\in \ks^{1,2}(\Omega,\Y_{\o}),\\
                                  +\infty & \textrm{otherwise};
                                  \end{cases}
\end{equation*}
\item $\E^\Omega_2:L^2(\Omega,\Y_{\o})\to[0,+\infty]$ is lower semicontinuous;
\item any $u\in \ks^{1,2}(\Omega,\Y_\o)$ is approximately metrically differentiable $\mm$-a.e. in $\Omega$,
where we extend $u$ to the whole $\X$ by setting it to be constant outside of $\Omega$.
\end{enumerate}
\end{thm}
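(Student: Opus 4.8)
The plan is to prove the four assertions in the order (4), (2), (1), (3): the $\mm$-a.e.\ approximate metric differentiability of (4) underlies the pointwise identification of the energy density in (2), which in turn feeds both the Sobolev identification (1) and the lower semicontinuity (3). For (4) I would first show that every $u\in\ks^{1,2}(\Omega,\Y_\o)$ enjoys the Lusin--Lipschitz property, i.e.\ there is an increasing sequence of Borel subsets covering $\Omega$ up to an $\mm$-negligible set on each of which $u$ is Lipschitz. This is a Haj\l asz-type argument: finiteness of $\E_2^\Omega(u)$ forces the maximal function of the approximate energies to be finite $\mm$-a.e., and on its sublevel sets the Poincar\'e inequality together with the doubling property yields a genuine pointwise Lipschitz bound for $u$, on which Proposition \ref{prop:loclip} lets one identify $\lip(u)$ with $\aplip(u)$. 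Granting Lusin--Lipschitz, the $\mm$-a.e.\ approximate metric differentiability is exactly \cite[Proposition 3.6]{GT2}, and Lemma \ref{lem:approx} records $\aplip(u)(x)=\Vert\md_x(u)\Vert$ at $\mm$-a.e.\ $x$.

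For (2), at each point $x$ of approximate metric differentiability I would replace $\sfd_\Y(u(y),u(x))$ inside the defining average by the seminorm $\md_x(u)(\varphi^{\eps_n}_{i}(y)-\varphi^{\eps_n}_{i}(x))$, the error being $\o(\sfd(y,x))$; since $\varphi^{\eps_n}_{i}$ is $(1+\eps_n)$-bi-Lipschitz and $(\varphi^{\eps_n}_{i})_\#\mm$ is comparable to $\mathcal L^d$, the rescaled integral $\fint_{B_r(x)}\sfd_\Y^2(u(x),u(y))/r^2\,\d\mm$ converges, first as $r\downarrow0$ and then as $\eps_n\downarrow0$ to remove the distortion, to a Lebesgue average of $\md_x(u)(\cdot)^2$ over the Euclidean unit ball, which I call $\e_2^2[u](x)$. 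This gives $\mm$-a.e.\ convergence, and Fatou's lemma already yields $\e_2[u]\in L^2(\Omega)$ with $\int_\Omega\e_2^2[u]\,\d\mm\le\E_2^\Omega(u)$. To upgrade to $L^2$-convergence I would note that a seminorm on $\mathbb R^d$ has operator norm comparable to its ball average, so $\aplip(u)=\Vert\md(u)\Vert$ is comparable to $\e_2[u]$ and hence lies in $L^2$; applying the scalar Poincar\'e inequality to $y\mapsto\sfd_\Y(u(x),u(y))$ bounds $\kse_{2,r}[u](x)$ by a Hardy--Littlewood maximal function of $\aplip(u)$, which is in $L^2$ by the maximal inequality on doubling spaces. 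Dominated convergence then promotes a.e.\ convergence to $L^2$-convergence, forces the $\liminf$ in \eqref{eq:defkso} to be a limit, and gives the integral representation of $\E_2^\Omega$.

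For (1), the inclusion $\ks^{1,2}\subset W^{1,2}$ follows because any $1$-Lipschitz $f:\Y\to\mathbb R$ satisfies $|f(u(y))-f(u(x))|\le\sfd_\Y(u(y),u(x))$, so $\kse_{2,r}[f\circ u]\le\kse_{2,r}[u]$ pointwise and $\E_2^\Omega(f\circ u)\le\E_2^\Omega(u)<\infty$; the equivalence between the Korevaar--Schoen and the minimal-weak-upper-gradient descriptions of $W^{1,2}(\Omega)$ for real-valued functions, valid on doubling PI spaces, then gives $f\circ u\in W^{1,2}(\Omega)$ with $|D(f\circ u)|\lesssim\e_2[u]$, so one may take $G=C\,\e_2[u]$. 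Conversely, for $u\in W^{1,2}(\Omega,\Y_\o)$ with common bound $G$, I would fix $x$ and observe that $v_x:=\sfd_\Y(u(x),u(\cdot))$ is the post-composition of $u$ with the $1$-Lipschitz map $\sfd_\Y(u(x),\cdot)$, hence $|Dv_x|\le G$; since $v_x(x)=0$ one has $\kse_{2,r}[u](x)=(\fint_{B_r(x)}v_x^2/r^2\,\d\mm)^{1/2}$, and the scalar Poincar\'e inequality bounds this by a maximal function of $G$, so that integrating yields $\E_2^\Omega(u)\lesssim\Vert G\Vert_{L^2}^2<\infty$ and $u\in\ks^{1,2}$.

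For (3), for each fixed $r>0$ the functional $\mathcal F_r(v):=\int_\Omega\kse_{2,r}^2[v,\Omega]\,\d\mm$ is lower semicontinuous along $L^2$-convergence: passing to an $\mm$-a.e.\ convergent subsequence, continuity of $\sfd_\Y$ and Fatou's lemma give $\mathcal F_r(u)\le\liminf_n\mathcal F_r(u_n)$. The main obstacle is to pass from a fixed scale to the energy, because $\E_2^\Omega$ is a $\liminf$ in $r$ of the $\mathcal F_r$ and a $\liminf$ of lower semicontinuous functionals need not be lower semicontinuous. I would resolve this with the uniform-in-scale control supplied by (2): given $u_n\to u$ in $L^2$, fix $\delta>0$ and choose $r$ with $\mathcal F_r(u)\ge\E_2^\Omega(u)-\delta$, then combine the fixed-$r$ lower semicontinuity with the maximal-function bound $\mathcal F_r(u_n)\lesssim\E_2^\Omega(u_n)$ (the near-monotonicity of the rescaled averages, again from the scalar Poincar\'e estimate) to obtain $\E_2^\Omega(u)-\delta\le\liminf_n\E_2^\Omega(u_n)$, and let $\delta\downarrow0$. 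I expect the genuinely delicate points to be the $L^2$-upgrade in (2) and this scale passage in (3): both rest on controlling the approximate energies \emph{uniformly over scales} by a single $L^2$ maximal function, and it is precisely here that the doubling property and the Poincar\'e inequality are used in an essential way.
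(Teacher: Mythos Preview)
This theorem is not proved in the present paper: it is quoted verbatim from \cite[Theorem~5.7]{GT2} as a preliminary result, with no argument supplied here. So there is no ``paper's own proof'' to compare against beyond the citation. Your outline is broadly in the spirit of the Gigli--Tyulenev proof (Lusin--Lipschitz via maximal functions, then metric differentiability from \cite[Proposition~3.6]{GT2}, then the pointwise identification of $\e_2[u]$ through the charts, and finally the Sobolev identification via post-composition with $1$-Lipschitz functions), and those parts are a reasonable sketch.

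There is, however, a genuine gap in your argument for (3). From fixed-scale lower semicontinuity you get $\mathcal F_r(u)\le\liminf_n\mathcal F_r(u_n)$, and you then invoke a bound $\mathcal F_r(u_n)\lesssim\E_2^\Omega(u_n)$ coming from the Poincar\'e/maximal-function control. But that bound carries a structural constant $C>1$ (depending on the doubling and Poincar\'e data), so what you actually obtain is
\[
\E_2^\Omega(u)-\delta\ \le\ \mathcal F_r(u)\ \le\ \liminf_n \mathcal F_r(u_n)\ \le\ C\,\liminf_n \E_2^\Omega(u_n),
\]
which after $\delta\downarrow0$ only yields $\E_2^\Omega(u)\le C\,\liminf_n\E_2^\Omega(u_n)$, not lower semicontinuity. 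Your ``near-monotonicity'' remark does not repair this: the Poincar\'e estimate compares $\kse_{2,r}$ to a maximal function of an upper gradient, not to $\kse_{2,r'}$ with a constant tending to $1$. In \cite{GT2} (and already in Korevaar--Schoen) this is handled differently: one exploits the explicit representation of $\e_2[u]$ in terms of $\md_x(u)$ and proves lower semicontinuity at the level of the metric differential (equivalently, one writes $\E_2^\Omega$ as a supremum of functionals that are manifestly $L^2$-continuous), rather than trying to pass through a single fixed scale. If you want a self-contained argument, that is the step you need to rethink.
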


We also need the following (see \cite[Propositions 4.6 and 4.19]{GT2}):
\begin{prop}[\cite{GT2}]\label{thm:consR}
Let $(\X,\sfd,\mm)$ be a $d$-dimensional, locally uniformly doubling, strongly rectifiable space satisfying a Poincar\'e inequality,
and let $\Omega$ be an open subset.
Then there exists $c_d>0$ depending only on $d$ such that
for every $u\in \ks^{1,2}(\Omega,\mathbb{R})$,
\begin{equation}\label{eq:consR}
\e_2[u]=c_d\,|Du| \quad \text{$\mm$-a.e. on $\Omega$}.
\end{equation}
\end{prop}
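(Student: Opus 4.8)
The plan is to prove the identity pointwise $\mm$-a.e., by passing to the Euclidean charts provided by the strong rectifiability and reducing both sides to the (Euclidean) length of the approximate gradient of $u$ read in a chart. Fix $\mm$-a.e.\ $x\in\Omega$ at which, simultaneously, the energy density exists as the $\mm$-a.e.\ limit of the approximate energies (Theorem \ref{thm:openenerygy} (2)), the map $u$ is approximately metrically differentiable relatively to the aligned family $\{\mathcal A^{\eps_n}\}_n$ (Theorem \ref{thm:openenerygy} (4)), and the chart conditions (density of $x$ in $U_i^{\eps_n}$ and of $\varphi_i^{\eps_n}(x)$ in its image) hold. Since the target is $\mathbb{R}$, the scalar refinement of approximate metric differentiability provides an approximate gradient $a_x\in\mathbb{R}^d$ with $u(y)-u(x)=\langle a_x,\varphi_i^{\eps_n}(y)-\varphi_i^{\eps_n}(x)\rangle+\o(\sfd(y,x))$ in the approximate sense, so that the metric differential is the seminorm $\md_x(u)(z)=|\langle a_x,z\rangle|$.

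First I would compute the energy density in terms of $a_x$. Writing $\varphi:=\varphi_i^{\eps_n}$ and using approximate metric differentiability to replace $\sfd_\Y(u(x),u(y))=|u(x)-u(y)|$ by $\md_x(u)(\varphi(y)-\varphi(x))=|\langle a_x,\varphi(y)-\varphi(x)\rangle|$ up to an error that is $\o(\sfd(x,y))$ off a set of vanishing density, the $(1+\eps_n)$-bi-Lipschitz property lets me replace the metric ball $B_r(x)$ by the Euclidean ball of radius $r$ around $\varphi(x)$, and condition (3) of the atlas lets me replace $(\varphi)_\#\mm$ by its normalized Lebesgue counterpart, the constants $c_i$ cancelling in the averaging. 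After the scaling $z=rw$ this gives, in the limit $r\downarrow0$ and then $\eps_n\downarrow0$,
\begin{equation*}
\e_2^2[u](x)=\fint_{B_1(0)}\big|\langle a_x,w\rangle\big|^2\,\d\mathcal L^d(w)=\frac{|a_x|^2}{d}\fint_{B_1(0)}|w|^2\,\d\mathcal L^d(w)=c_d^2\,|a_x|^2,
\end{equation*}
where $c_d:=(d+2)^{-1/2}$ depends only on $d$; the isotropy of the Euclidean ball is what makes the middle equality, and hence the constant, independent of $a_x$.

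Next I would identify $|Du|(x)$ with $|a_x|$. By Lemma \ref{lem:approx} one has $\aplip(u)(x)=\Vert\md_x(u)\Vert=\sup_{|z|\le1}|\langle a_x,z\rangle|=|a_x|$, while Proposition \ref{prop:loclip} gives $\lip(u)(x)=\aplip(u)(x)$ at the density point $x$; combined with the identification $|Du|=\lip(u)$ $\mm$-a.e., which holds on locally doubling spaces supporting a Poincar\'e inequality (via the Lusin--Lipschitz approximation of Sobolev functions and Cheeger's differentiation theory), this yields $|Du|(x)=|a_x|$ for $\mm$-a.e.\ $x$. Combining the two displays gives $\e_2[u](x)=c_d\,|a_x|=c_d\,|Du|(x)$ $\mm$-a.e., which is \eqref{eq:consR}.

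The main obstacle is the rigorous justification of the first display: the passage from the intrinsic average $\fint_{B_r(x)}r^{-2}|u(x)-u(y)|^2\,\d\mm(y)$ to the clean Euclidean integral requires controlling three simultaneous approximations — the defect in approximate metric differentiability (which only holds off sets of vanishing $\mm$-density, so one must show these exceptional sets contribute negligibly to the $L^2$-average), the bi-Lipschitz distortion between $\sfd$ and the chart metric, and the comparison between $(\varphi)_\#\mm$ and $\mathcal L^d$ — and then interchanging the scale limit $r\downarrow0$ with the limit $\eps_n\downarrow0$ along the aligned atlases. Taming the approximate-limit error terms uniformly enough to survive the averaging is the delicate point, and is precisely where the structural estimates behind \cite[Propositions 4.6 and 4.19]{GT2} are needed.
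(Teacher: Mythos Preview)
The paper does not supply its own proof of this proposition; it is quoted verbatim from \cite[Propositions 4.6 and 4.19]{GT2}, so there is no in-paper argument to compare against. Your sketch is essentially the strategy carried out in \cite{GT2}: express both $\e_2[u]$ and $|Du|$ at $\mm$-a.e.\ point through the approximate Euclidean gradient $a_x$ read in a chart, then observe that the spherical average $\fint_{B_1(0)}|\langle a_x,w\rangle|^2\,\d\mathcal L^d(w)=|a_x|^2/(d+2)$ produces the dimensional constant. Your identification of the delicate point --- controlling simultaneously the approximate-differential defect, the bi-Lipschitz chart distortion, and the measure comparison, uniformly enough to pass to the limit in the averaged energy --- is accurate, and is exactly what the structural lemmas in \cite[\S3--4]{GT2} are designed to handle. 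One small caveat: the step $|Du|=\lip(u)$ $\mm$-a.e.\ that you invoke does not hold for arbitrary Sobolev functions but only after a Lusin--Lipschitz decomposition (Cheeger's theorem applies to Lipschitz functions); you allude to this, but in a full proof one must argue via locality of both $|Du|$ and the energy density on the Lipschitz pieces.
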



Let $W^{1,2}_0(\Omega)$ be the $W^{1,2}(\Omega)$-closure of the set of all functions in $W^{1,2}(\Omega)$ whose support is contained in $\Omega$.
For a fixed $\bar u\in \ks^{1,2}(\Omega,\Y_{\o})$,
we define
\begin{equation*}
\ks^{1,2}_{\bar u}(\Omega,\Y_{\o}):=\big\{u\in\ks^{1,2}(\Omega,\Y_{\o}) \mid \sfd_\Y(u,\bar u)\in W^{1,2}_0(\Omega)\big\},
\end{equation*}
and the associated energy functional $\E^\Omega_{2,\bar u}:L^2(\Omega,\Y_\o)\to[0,+\infty]$ as
\begin{equation*}
\E^\Omega_{2,\bar u}(u):= \begin{cases}
                                  \displaystyle{\E_2^\Omega(u)} & \textrm{if}\ u\in \ks^{1,2}_{\bar u}(\Omega,\Y_{\o}),\\
                                  +\infty & \textrm{otherwise}.
                                  \end{cases}
\end{equation*}

We close this section with the following (see \cite[Proposition 5.10]{GT2}):
\begin{prop}[\cite{GT2}]\label{prop:Dirichlet}
Let $(\X,\sfd,\mm)$ be a locally uniformly doubling, strongly rectifiable space satisfying a Poincar\'e inequality,
and let $\Omega$ be an open subset.
Let $\Y_\o=(\Y,\sfd_\Y,\o)$ be a pointed complete metric space.
Then for a fixed $\bar u\in \ks^{1,2}(\Omega,\Y_{\o})$ the following hold:
\begin{enumerate}\setlength{\itemsep}{+0.7mm}
\item $\E^\Omega_{2,\bar u}$ is lower semicontinuous;
\item for any $u,v\in \ks^{1,2}_{\bar u}(\Omega,\Y_\o)$ we have $\sfd_\Y(u,v)\in W^{1,2}_0(\Omega)$.
\end{enumerate}
\end{prop}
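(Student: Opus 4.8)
The plan is to reduce both assertions to a single estimate: for any $u,v\in\ks^{1,2}(\Omega,\Y_\o)$, the real-valued map $\sfd_\Y(u,v)\colon x\mapsto\sfd_\Y(u(x),v(x))$ lies in $\ks^{1,2}(\Omega,\mathbb R)$, hence in $W^{1,2}(\Omega)$ by Theorem \ref{thm:openenerygy}(1) and Proposition \ref{thm:consR}, and its energy is controlled by those of $u$ and $v$:
\[
\E_2^\Omega\big(\sfd_\Y(u,v)\big)\le 2\,\E_2^\Omega(u)+2\,\E_2^\Omega(v).
\]
To establish this I would start from the pointwise triangle inequality $|\sfd_\Y(u(x),v(x))-\sfd_\Y(u(y),v(y))|\le\sfd_\Y(u(x),u(y))+\sfd_\Y(v(x),v(y))$ and apply Minkowski's inequality in $L^2(B_r(x),\fint\,\d\mm)$ to obtain $\kse_{2,r}[\sfd_\Y(u,v),\Omega]\le\kse_{2,r}[u,\Omega]+\kse_{2,r}[v,\Omega]$ for every $r>0$; squaring, integrating and passing to $\liminf_{r\downarrow0}$ with the help of Theorem \ref{thm:openenerygy}(2) yields the displayed bound, so that $\sfd_\Y(u,v)\in\ks^{1,2}(\Omega,\mathbb R)$. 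Proposition \ref{thm:consR} then converts the energy density into the minimal weak upper gradient, $\e_2[\sfd_\Y(u,v)]=c_d\,|D\sfd_\Y(u,v)|$, so that $\|\,|D\sfd_\Y(u,v)|\,\|_{L^2(\Omega)}^2=c_d^{-2}\,\E_2^\Omega(\sfd_\Y(u,v))$ is likewise controlled.

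For assertion (2), fix $u,v\in\ks^{1,2}_{\bar u}(\Omega,\Y_\o)$ and put $h:=\sfd_\Y(u,v)$ and $w:=\sfd_\Y(u,\bar u)+\sfd_\Y(v,\bar u)$. By the above, $h\in W^{1,2}(\Omega)$ and $h\ge0$; since $W^{1,2}_0(\Omega)$ is a linear subspace and both summands of $w$ lie in it by definition of $\ks^{1,2}_{\bar u}$, we have $w\in W^{1,2}_0(\Omega)$, while the triangle inequality gives $0\le h\le w$ $\mm$-a.e. It then remains to run a sandwich argument. I would pick $w_k\in W^{1,2}(\Omega)$ with $\supp w_k\subset\Omega$ and $w_k\to w$ in $W^{1,2}(\Omega)$, replacing each by its positive part (legitimate since $w\ge0$ and truncation is $W^{1,2}$-continuous) so that in addition $w_k\ge0$. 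Because $h\ge0$, the function $\min(h,w_k)$ vanishes wherever $w_k$ does, whence $\supp\min(h,w_k)\subset\supp w_k\subset\Omega$; and from $\min(h,w_k)=h-(h-w_k)^+$ together with the $W^{1,2}$-continuity of the lattice operations, $\min(h,w_k)\to\min(h,w)=h$ in $W^{1,2}(\Omega)$. As each $\min(h,w_k)$ has support in $\Omega$, its limit $h=\sfd_\Y(u,v)$ belongs to $W^{1,2}_0(\Omega)$, which is the claim.

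For the lower semicontinuity in (1), take $u_n\to u$ in $L^2(\Omega,\Y_\o)$; I may assume $\ell:=\limi_n\E^\Omega_{2,\bar u}(u_n)<+\infty$ and, after passing to a subsequence realizing the $\liminf$, that each $u_n\in\ks^{1,2}_{\bar u}(\Omega,\Y_\o)$ with $\E_2^\Omega(u_n)\to\ell$. Theorem \ref{thm:openenerygy}(3) already gives $u\in\ks^{1,2}(\Omega,\Y_\o)$ and $\E_2^\Omega(u)\le\ell$, so everything reduces to verifying $u\in\ks^{1,2}_{\bar u}(\Omega,\Y_\o)$, i.e. $g:=\sfd_\Y(u,\bar u)\in W^{1,2}_0(\Omega)$. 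Set $g_n:=\sfd_\Y(u_n,\bar u)\in W^{1,2}_0(\Omega)$. Then $g_n\to g$ in $L^2(\Omega)$, because $|g_n-g|\le\sfd_\Y(u_n,u)$ forces $\|g_n-g\|_{L^2(\Omega)}\le\sfd_{L^2}(u_n,u)\to0$; and $(g_n)$ is bounded in $W^{1,2}(\Omega)$, since the estimate of the first paragraph (with $v=\bar u$) gives $\int_\Omega|Dg_n|^2\,\d\mm\le 2c_d^{-2}\big(\sup_n\E_2^\Omega(u_n)+\E_2^\Omega(\bar u)\big)<+\infty$.

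The heart of the matter is then the closedness of $W^{1,2}_0(\Omega)$ under this convergence: a sequence $g_n\in W^{1,2}_0(\Omega)$ with $g_n\to g$ in $L^2(\Omega)$ and $\sup_n\|g_n\|_{W^{1,2}(\Omega)}<+\infty$ should satisfy $g\in W^{1,2}_0(\Omega)$. This is the step I expect to be the main obstacle, the more so as infinitesimal Hilbertianity is \emph{not} assumed here, so one cannot merely invoke weak $L^2$-compactness of the gradients together with a Hilbert structure. I would resolve it by reflexivity: under the doubling and Poincaré hypotheses $W^{1,2}(\X)$ is reflexive (Cheeger), and via extension by zero $W^{1,2}_0(\Omega)$ is isometric to a closed subspace of $W^{1,2}(\X)$, hence itself reflexive; the bounded sequence $(g_n)$ then has a subsequence converging weakly in $W^{1,2}_0(\Omega)$, whose limit must coincide with $g$ by the strong $L^2$-convergence, giving $g\in W^{1,2}_0(\Omega)$. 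With $u\in\ks^{1,2}_{\bar u}(\Omega,\Y_\o)$ secured we conclude $\E^\Omega_{2,\bar u}(u)=\E_2^\Omega(u)\le\ell=\limi_n\E^\Omega_{2,\bar u}(u_n)$, which is (1). The delicate points to pin down are precisely the identification of $W^{1,2}_0(\Omega)$ with a closed subspace of the reflexive $W^{1,2}(\X)$ and the $W^{1,2}$-continuity of the lattice operations used above.
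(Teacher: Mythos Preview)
The paper does not supply its own proof of this proposition: it is quoted from \cite[Proposition 5.10]{GT2} and used as a black box, so there is no argument in the present text to compare yours against.

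That said, your outline is essentially the standard route and is sound in structure. The triangle inequality plus Theorem~\ref{thm:openenerygy} and Proposition~\ref{thm:consR} do give $\sfd_\Y(u,v)\in W^{1,2}(\Omega)$ with controlled gradient; the sandwich $0\le\sfd_\Y(u,v)\le\sfd_\Y(u,\bar u)+\sfd_\Y(v,\bar u)$ reduces (2) to the order-theoretic fact that a nonnegative $W^{1,2}$ function dominated by a $W^{1,2}_0$ function lies in $W^{1,2}_0$; and (1) follows from lower semicontinuity of $\E_2^\Omega$ together with the weak closedness of $W^{1,2}_0(\Omega)$ under $L^2$-convergent, $W^{1,2}$-bounded sequences, which is exactly where Cheeger's reflexivity enters. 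The two points you flag as delicate are the right ones. For the lattice step, note that you do not actually need strong $W^{1,2}$-continuity of $(\,\cdot\,)^+$ (which is awkward without infinitesimal Hilbertianity): it suffices that $\min(h,w_k)$ is bounded in $W^{1,2}$ and converges to $h$ in $L^2$, after which the very same reflexivity/weak-compactness argument you use for (1) places $h$ in $W^{1,2}_0(\Omega)$. For the identification of $W^{1,2}_0(\Omega)$ with a closed subspace of $W^{1,2}(\X)$ via extension by zero, this is standard under doubling plus Poincar\'e (see e.g.\ \cite{BB}), and is implicitly what \cite{GT2} relies on as well.
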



\section{Dirichlet problem}\label{sec:Dirichlet problem}
In the present section,
we give a proof of Theorem \ref{thm:main}.
We proceed along the line of the proof of \cite[Theorem 1.16]{S},  \cite[Theorem 6.4]{GT2}.

\subsection{Technical lemma}\label{sec:tec}
This subsection is devoted to the proof of a technical lemma.
Let $(\X,\sfd,\mm)$ be a metric measure space,
and let $\Omega$ be an open subset of $\X$.
Let $\Y_\o=(\Y,\sfd_\Y,\o)$ be a pointed complete metric space.
Let $u,v,w \in L^2(\Omega,\Y_{\o})$.
For $\alpha>0$ and $x\in \Omega$,
we define the following (Borel) sets
\begin{align}\label{eq:tecset}
\Omega_{v,\alpha,x}&:=\{y\in \Omega \mid \sfd_\Y(v(x),v(y))\geq \alpha\},\\ \notag
\Omega_{w,\alpha,x}&:=\{y\in \Omega \mid \sfd_\Y(w(x),w(y))\geq \alpha\},\\ \notag
\Omega_{v,w,\alpha,x}&:=\Omega_{v,\alpha,x}\cup \Omega_{w,\alpha,x}.
\end{align}
We also define a \textit{modified approximate energy} by
\begin{equation}\label{eq:tecfct}
\kse_{2,r}[u,\Omega;v,w,\alpha](x):= \begin{cases}
                                  \displaystyle{\left|\frac{1}{\mm(B_r(x))}\int_{B_r(x)\setminus \Omega_{v,w,\alpha,x}} \frac{\sfd^2_\Y(u(x),u(y))}{r^2}\,\d \mm(y) \right|^{1/2}} & \textrm{if}\ B_{r}(x)\subset \Omega,\\
                                  0 & \textrm{otherwise}.
                                  \end{cases}
\end{equation}

We now state and prove our key lemma (cf. \cite[Lemma 1.12]{S}, \cite[Lemma 3.1]{HZ}):
\begin{lem}\label{lem:tech}
Let $(\X,\sfd,\mm)$ be a  locally uniformly doubling, strongly rectifiable space satisfying a Poincar\'e inequality,
and let $\Omega$ be an open subset.
Let $\Y_\o=(\Y,\sfd_\Y,\o)$ be a pointed complete metric space.
Let $u,v,w \in \ks^{1,2}(\Omega,\Y_\o)$ and $\alpha>0$ fixed.
Then we have
\begin{equation}\label{eq:tech}
\kse_{2,r}[u,\Omega;v,w,\alpha] \to  \e_2[u]\quad\text{$\mm$-a.e. on $\Omega$ and in $L^2(\Omega)$ as $r\downarrow0$}.
\end{equation}
\end{lem}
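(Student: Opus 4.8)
\emph{Overview.} The plan is to derive \eqref{eq:tech} from the corresponding statement for the \emph{unmodified} approximate energy, Theorem~\ref{thm:openenerygy}(2), by showing that the term produced by the excision is negligible. I would work at a $\mm$-a.e.\ point $x\in\Omega$ for which $B_r(x)\subset\Omega$ for all small $r$, $\kse^2_{2,r}[z,\Omega](x)\to\e_2^2[z](x)<+\infty$ as $r\downarrow0$ for $z\in\{u,v,w\}$ (Theorem~\ref{thm:openenerygy}(2)), and $x$ is a Lebesgue point of $G^2$, where $G\in L^2(\Omega)$ is the function introduced below. Set $\phi_x:=\sfd_\Y(u(x),u(\cdot))$; for $B_r(x)\subset\Omega$ one has $\kse^2_{2,r}[u,\Omega](x)=\tfrac1{r^2}\fint_{B_r(x)}\phi_x^2\,\d\mm$ and
\[
\kse^2_{2,r}[u,\Omega;v,w,\alpha](x)=\kse^2_{2,r}[u,\Omega](x)-\frac1{r^2}\fint_{B_r(x)}\mathbf{1}_{\Omega_{v,w,\alpha,x}}\,\phi_x^2\,\d\mm ,
\]
so it suffices to prove that the subtracted term tends to $0$ as $r\downarrow0$ for $\mm$-a.e.\ $x$, and then to upgrade the ensuing $\mm$-a.e.\ convergence to convergence in $L^2(\Omega)$. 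The first ingredient is that the excised set has vanishing relative measure: by Chebyshev's inequality
\[
\frac{\mm\big(B_r(x)\cap\Omega_{v,\alpha,x}\big)}{\mm(B_r(x))}\le\frac1{\alpha^2\,\mm(B_r(x))}\int_{B_r(x)}\sfd^2_\Y\big(v(x),v(y)\big)\,\d\mm(y)=\frac{r^2}{\alpha^2}\,\kse^2_{2,r}[v,\Omega](x)\;\xrightarrow[r\downarrow0]{}\;0 ,
\]
and likewise with $w$, so $\beta_r(x):=\mm\big(B_r(x)\cap\Omega_{v,w,\alpha,x}\big)/\mm(B_r(x))\to0$.

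\emph{A gain of integrability.} The second ingredient is a deviation estimate for $\phi_x^2$. Since $\sfd_\Y(u(x),\cdot)\colon\Y\to\mathbb R$ is $1$-Lipschitz, Theorem~\ref{thm:openenerygy}(1) and the definition of $W^{1,2}(\Omega,\Y_\o)$ give $\phi_x\in W^{1,2}(\Omega)$ with $|D\phi_x|\le G$ $\mm$-a.e., with $G\in L^2(\Omega)$ independent of $x$; applying the chain rule (Theorem~\ref{thm:gradchain}) to the truncations $\phi_x\wedge N$ and letting $N\to\infty$ gives $\phi_x^2\in W^{1,1}_{\mathrm{loc}}(\Omega)$ with $|D(\phi_x^2)|\le 2\phi_x G$ (in $L^1_{\mathrm{loc}}$, since $\phi_x,G\in L^2_{\mathrm{loc}}$). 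The locally uniformly doubling property together with the assumed Poincar\'e inequality self-improves, in the standard way, to a Sobolev--Poincar\'e inequality valid for $W^{1,1}_{\mathrm{loc}}$ functions: there are $q>1$ and $C,\lambda\ge1$, locally uniform, with
\[
\Big(\fint_{B_r(x)}\big|\phi_x^2-(\phi_x^2)_{B_r(x)}\big|^{q}\,\d\mm\Big)^{1/q}\le Cr\fint_{B_{\lambda r}(x)}2\phi_x G\,\d\mm\le 2Cr\Big(\fint_{B_{\lambda r}(x)}\phi_x^2\,\d\mm\Big)^{1/2}\Big(\fint_{B_{\lambda r}(x)}G^2\,\d\mm\Big)^{1/2}.
\]
Because $\fint_{B_{\lambda r}(x)}\phi_x^2\,\d\mm=(\lambda r)^2\,\kse^2_{2,\lambda r}[u,\Omega](x)\le C_x(\lambda r)^2$ for small $r$ and $\fint_{B_{\lambda r}(x)}G^2\,\d\mm\to G^2(x)<+\infty$, the left-hand side is $\le C_x r^2$; note also $(\phi_x^2)_{B_r(x)}=r^2\,\kse^2_{2,r}[u,\Omega](x)$.

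\emph{Conclusion.} For $B_r(x)\subset\Omega$ write
\[
\frac1{r^2}\fint_{B_r(x)}\mathbf{1}_{\Omega_{v,w,\alpha,x}}\,\phi_x^2\,\d\mm=\frac1{r^2}\fint_{B_r(x)}\mathbf{1}_{\Omega_{v,w,\alpha,x}}\big(\phi_x^2-(\phi_x^2)_{B_r(x)}\big)\,\d\mm+\kse^2_{2,r}[u,\Omega](x)\,\beta_r(x).
\]
The second summand tends to $0$ by the first ingredient and Theorem~\ref{thm:openenerygy}(2); for the first summand, H\"older's inequality with exponents $q,q'$ and the deviation estimate give the bound $r^{-2}\big(\fint_{B_r(x)}|\phi_x^2-(\phi_x^2)_{B_r(x)}|^{q}\,\d\mm\big)^{1/q}\beta_r(x)^{1/q'}\le C_x\,\beta_r(x)^{1/q'}\to0$. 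Hence $\kse^2_{2,r}[u,\Omega;v,w,\alpha](x)\to\e_2^2[u](x)$, i.e.\ $\kse_{2,r}[u,\Omega;v,w,\alpha](x)\to\e_2[u](x)$, for $\mm$-a.e.\ $x\in\Omega$. Finally $0\le\kse_{2,r}[u,\Omega;v,w,\alpha]\le\kse_{2,r}[u,\Omega]$ everywhere, $\kse_{2,r}[u,\Omega]\to\e_2[u]$ in $L^2(\Omega)$ (Theorem~\ref{thm:openenerygy}(2)), and $\mm(\Omega)<+\infty$ since $\Omega$ is bounded; therefore $\big|\kse_{2,r}[u,\Omega;v,w,\alpha]-\e_2[u]\big|^2\le\big(\kse_{2,r}[u,\Omega]+\e_2[u]\big)^2\to 4\,\e_2^2[u]$ in $L^1(\Omega)$ while converging to $0$ $\mm$-a.e., and the dominated convergence theorem with an $L^1$-convergent majorant yields $\kse_{2,r}[u,\Omega;v,w,\alpha]\to\e_2[u]$ in $L^2(\Omega)$. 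This establishes \eqref{eq:tech}.

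\emph{Main obstacle.} The crux is the deviation estimate for $\phi_x^2$: with only the plain Poincar\'e inequality (i.e.\ $q=1$) the first summand in the last display can be bounded at best by a constant, not by a vanishing quantity, because the $L^1$-oscillation of $\phi_x^2$ over $B_r(x)$ is of the critical size $r^2$, exactly cancelled by the $r^{-2}$ prefactor. The extra integrability $q>1$ furnished by the self-improvement of the Poincar\'e inequality is precisely what converts the decay $\beta_r(x)\to0$ of the relative measure of the excised region into decay of its energy contribution. A secondary technical point is that $t\mapsto t^2$ is not globally Lipschitz, so the weak-upper-gradient bound for $\phi_x^2$ must be obtained by truncation and a limiting argument, which is legitimate here because $\phi_x,G\in L^2_{\mathrm{loc}}(\Omega)$.
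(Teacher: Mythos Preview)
Your argument is correct, but it takes a genuinely different route from the paper's. After the common reduction to showing that the excised integral
\[
\frac{1}{\mm(B_r(x))}\int_{B_r(x)\cap\Omega_{v,w,\alpha,x}}\frac{\sfd_\Y^2(u(x),u(y))}{r^2}\,\d\mm(y)
\]
tends to $0$, the paper proceeds by a pointwise $L^\infty$ bound: it invokes Theorem~\ref{thm:openenerygy}(4) together with the Lusin--Lipschitz structure from \cite{GT2} to deduce that $\lip u(x)<+\infty$ for $\mm$-a.e.\ $x$, and then simply estimates the integrand by $\sup_{y\in B_r(x)\setminus\{x\}}\sfd_\Y^2(u(x),u(y))/\sfd^2(x,y)$, which stays bounded, while the Chebyshev estimate supplies an extra factor $r^2$. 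Your approach bypasses the approximate metric differentiability machinery entirely and instead exploits the self-improvement of the Poincar\'e inequality to a $(q,1)$-Sobolev--Poincar\'e inequality, obtaining an $L^q$ oscillation bound for $\phi_x^2$ of size $O(r^2)$; H\"older's inequality then converts the decay $\beta_r(x)\to0$ into decay of the excised energy. The paper's route is shorter and stays within the toolbox already assembled (Proposition~\ref{prop:loclip}, Lemma~\ref{lem:approx}, Theorem~\ref{thm:openenerygy}(4)), whereas yours is a more purely functional-analytic argument that would survive in settings where the pointwise Lipschitz constant is not known to be finite a.e.\ but a Poincar\'e inequality with self-improvement is available. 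One minor point: in the lemma $\Omega$ is merely an open subset, not assumed bounded; your appeal to $\mm(\Omega)<+\infty$ is unwarranted, but also unnecessary, since the generalized dominated convergence argument you give works without it. The $L^2$-convergence step is essentially the same in both proofs (the paper cites \cite[Lemma~3.14]{GT2}).
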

\begin{proof}
We first prove the $\mm$-a.e. convergence.
Note that
for $\mm$-a.e. $x\in \Omega$ the following hold:
\begin{enumerate}\setlength{\itemsep}{+0.7mm}
\item $\kse_{2,r}[u,\Omega](x)\to \e_2[u](x), \kse_{2,r}[v,\Omega](x)\to \e_2[v](x),\kse_{2,r}[w,\Omega](x)\to \e_2[w](x)$ as $r\downarrow0$;
\item $\lip u(x)<+\infty$.
\end{enumerate}
Indeed,
the first property is a consequence of Theorem \ref{thm:openenerygy}.
Concerning the second one,
Theorem \ref{thm:openenerygy} implies that
$u$ is approximately metrically differentiable $\mm$-a.e. $x\in \Omega$ relatively to an aligned family $\{\mathcal A^{\eps_n}\}_n$ of atlases.
For such an $x\in \Omega$,
and for each $n$, there is $i=i_{x,n}$ such that
$x$ belongs to a chart $U_{i}^{\eps_n}$ of $\mathcal A^{\eps_n}$, 
and it is a density point of $U_{i}^{\eps_n}$.
Thanks to \cite[(2.15), Corollary 3.10]{GT2},
we may assume that $u|_{U_{i}^{\eps_n}}$ is Lipschitz.
By Proposition \ref{prop:loclip} and Lemma \ref{lem:approx},
$\lip u(x)$ coincides with $\Vert \md_x(u)\Vert$,
which is finite.

Let $x\in \Omega$ be such that two properties above hold:
we shall prove \eqref{eq:tech} for such an $x$.
Since
\begin{equation*}
\kse^2_{2,r}[u,\Omega;v,w,\alpha](x)=\kse^2_{2,r}[u,\Omega](x)-\frac{1}{\mm(B_r(x))}   \int_{B_r(x)\cap \Omega_{v,w,\alpha,x}} \frac{\sfd^2_\Y(u(x),u(y))}{r^2}\,\d \mm(y),
\end{equation*}
it suffices to show that
\begin{equation}\label{eq:techkey}
\frac{1}{\mm(B_r(x))}   \int_{B_r(x)\cap \Omega_{v,w,\alpha,x}} \frac{\sfd^2_\Y(u(x),u(y))}{r^2}\,\d \mm(y)\to 0
\end{equation}
as $r\downarrow0$.
We have 
\begin{align*}\notag
&\quad \,\,\frac{1}{\mm(B_r(x))}   \int_{B_r(x)\cap \Omega_{v,w,\alpha,x}} \frac{\sfd^2_\Y(u(x),u(y))}{r^2}\,\d \mm(y)\\ \notag
&\leq \frac{\mm(B_r(x)\cap \Omega_{v,w,\alpha,x})}{\mm(B_r(x))} \sup_{y\in B_r(x)\setminus \{x\}}\frac{\sfd^2_{\Y}(u(x),u(y))}{\sfd^2(x,y)}\\
&\leq \left(\frac{\mm(B_r(x)\cap \Omega_{v,\alpha,x})}{\mm(B_r(x))}+\frac{\mm(B_r(x)\cap \Omega_{w,\alpha,x})}{\mm(B_r(x))} \right)\sup_{y\in B_r(x)\setminus \{x\}}\frac{\sfd^2_{\Y}(u(x),u(y))}{\sfd^2(x,y)}\\ \label{eq:techkey2}
&\leq \frac{r^2}{\alpha^2} \left( \kse^2_{2,r}[v,\Omega](x)+\kse^2_{2,r}[w,\Omega](x) \right)\sup_{y\in B_r(x)\setminus \{x\}}\frac{\sfd^2_{\Y}(u(x),u(y))}{\sfd^2(x,y)},
\end{align*}
where the last inequality follows from
\begin{align*}
\frac{\alpha^2}{r^2} \frac{\mm(B_r(x)\cap \Omega_{v,\alpha,x})}{\mm(B_r(x))}&\leq \frac{1}{\mm(B_r(x))} \int_{B_r(x)\cap \Omega_{v,\alpha,x}} \frac{\sfd^2_\Y(v(x),v(y))}{r^2}\,\d \mm(y)\leq \kse^2_{2,r}[v,\Omega](x),\\
\frac{\alpha^2}{r^2} \frac{\mm(B_r(x)\cap \Omega_{w,\alpha,x})}{\mm(B_r(x))}&\leq \frac{1}{\mm(B_r(x))} \int_{B_r(x)\cap \Omega_{w,\alpha,x}} \frac{\sfd^2_\Y(w(x),w(y))}{r^2}\,\d \mm(y)\leq \kse^2_{2,r}[w,\Omega](x).
\end{align*}
Therefore,
by letting $r\downarrow0$ we obtain \eqref{eq:techkey} and the sought $\mm$-a.e. convergence.
The $L^2$-convergence follows from $\kse^2_{2,r}[u,\Omega;v,w,\alpha]\leq \kse^2_{2,r}[u,\Omega]$ and \cite[Lemma 3.14]{GT2}.
\end{proof}

\subsection{$\CAT(\kappa)$ spaces}\label{sec:CAT}
In this subsection we shall review some basic properties of $\CAT(\kappa)$ spaces.
We refer to \cite{BBI}, \cite{BH}.
For $\kappa\in \mathbb{R}$,
let $M_\kappa$ be the $2$-dimensional space form of constant curvature $\kappa$.
Let $D_{\kappa}$ be the diameter of $M_{\kappa}$;
more precisely,
\begin{equation*}
D_\kappa:= \begin{cases}
                                  \pi/\sqrt{\kappa} & \textrm{if}\ \kappa>0,\\
                                  +\infty & \textrm{otherwise}.
                                  \end{cases}
\end{equation*}

Let $(\Y,\sfd_\Y)$ be a metric space.
A curve $\gamma:[0,1]\to \Y$ is said to be a \textit{geodesic} if
\begin{equation*}
\sfd_\Y(\gamma_t,\gamma_s)=|s-t|\sfd_\Y(\gamma_0,\gamma_1)
\end{equation*}
for all $t,s\in [0,1]$.
Further,
$(\Y,\sfd)$ is called a \textit{geodesic space} if for any pair of points $x,y\in \Y$,
there is a geodesic $\gamma:[0,1]\to \Y$ with $\gamma_0=x$ and $\gamma_1=y$.
For $\kappa \in \mathbb{R}$,
a complete geodesic space $(\Y,\sfd_\Y)$ is said to be a \textit{$\CAT(\kappa)$-space} if it satisfies the following \textit{$\kappa$-triangle comparison principle}:
for every geodesic triangle $\triangle_{pqr}$ in $\Y$ whose perimeter is less than $2 D_{\kappa}$,
and for every point $x$ on the segment between $q$ and $r$,
it holds that
\begin{equation*}
\sfd_{\Y}(p,x)\leq \sfd_{\kappa}(\bar{p},\bar{x}),
\end{equation*}
where $\sfd_{\kappa}$ is the Riemannian distance on $M_\kappa$,
and $\bar{p}$ and $\bar{x}$ are comparison points in $M_\kappa$.

Let us recall the following basic properties (see e.g., \cite[Proposition 1.4]{BH}):
\begin{prop}[\cite{BH}]\label{prop:cat}
For $\kappa \in \mathbb{R}$,
let $(\Y,\sfd_\Y)$ be a $\CAT(\kappa)$ space.
Then we have:
\begin{enumerate}\setlength{\itemsep}{+0.7mm}
\item  For each pair of points $x,y\in \Y$ with $\sfd_\Y(x,y)<D_{\kappa}$,
there exists a unique geodesic from $x$ to $y$;
moreover,
it varies continuously with its end points;
\item any ball in $\Y$ whose radius is less than $D_{\kappa}/2$ is convex;
namely,
for each pair of points in such a ball,
the unique geodesic joining them is contained the ball. 
\end{enumerate}
\end{prop}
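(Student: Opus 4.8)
The plan is to obtain all three assertions by transferring the corresponding elementary facts from the two-dimensional model space $M_\kappa$ to $\Y$ via the $\kappa$-triangle comparison principle recalled just above; this is precisely \cite[Proposition~1.4]{BH}, so one option is simply to defer to that reference, but let me indicate how the argument runs. Throughout, for $x,y\in\Y$ with $\sfd_\Y(x,y)<D_\kappa$ I write $[x,y]$ for a geodesic joining $x$ to $y$, parametrised affinely on $[0,1]$ (one exists because $\Y$ is a geodesic space); the task is to show it is unique, that it depends continuously on $(x,y)$, and that it stays inside any sufficiently small ball containing both $x$ and $y$.

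For uniqueness, I would take two geodesics $\gamma,\gamma'$ from $x$ to $y$, set $\ell:=\sfd_\Y(x,y)<D_\kappa$, fix $t\in(0,1)$, and apply the comparison principle to the geodesic triangle with vertices $x$, $y$, $\gamma'_t$, two of whose sides are sub-segments of $\gamma'$. Its perimeter equals $t\ell+(1-t)\ell+\ell=2\ell<2D_\kappa$, so comparison is available, and since the comparison triangle in $M_\kappa$ has side lengths $\ell$, $t\ell$, $(1-t)\ell$ it is degenerate: the comparison point of $\gamma'_t$ and that of $\gamma_t\in[x,y]$ are the same point of the model segment $[\bar x,\bar y]$, namely the one at distance $t\ell$ from $\bar x$. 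Comparing the vertex $\gamma'_t$ with the point $\gamma_t$ on the opposite side $[x,y]$ then forces $\sfd_\Y(\gamma_t,\gamma'_t)\le\sfd_\kappa(\bar\gamma_t,\bar\gamma'_t)=0$, so $\gamma=\gamma'$. For continuous dependence, given $x_n\to x$ and $y_n\to y$ with $\sfd_\Y(x,y)<D_\kappa$, I would insert the auxiliary geodesic $[x,y_n]$ and bound $\sup_t\sfd_\Y([x,y]_t,[x_n,y_n]_t)$ by applying comparison to the triangles with vertices $x,y,y_n$ and $x,x_n,y_n$ (both of perimeter $<2D_\kappa$ for $n$ large), thereby reducing the continuity to the evident continuous dependence of geodesics on endpoints in $M_\kappa$. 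I would carry out this last estimate by hand rather than via an Arzel\`a--Ascoli argument, since $\Y$ is not assumed locally compact; this is the one mildly delicate point.

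For the convexity of a ball $B_r(c)$ with $r<D_\kappa/2$, I would take $x,y\in B_r(c)$, observe that $\sfd_\Y(x,y)\le\sfd_\Y(x,c)+\sfd_\Y(c,y)<2r<D_\kappa$ so that $\gamma:=[x,y]$ exists and is unique, and apply comparison to the triangle with vertices $c$, $x$, $y$, whose perimeter is at most $2\big(\sfd_\Y(c,x)+\sfd_\Y(c,y)\big)<4r<2D_\kappa$. This gives $\sfd_\Y(c,\gamma_t)\le\sfd_\kappa(\bar c,\bar\gamma_t)$, where $\bar\gamma_t$ lies on the model segment $[\bar x,\bar y]$ and $\sfd_\kappa(\bar c,\bar x),\sfd_\kappa(\bar c,\bar y)<r$; since metric balls of radius $<D_\kappa/2$ in $M_\kappa$ are convex (elementary Euclidean, hyperbolic and spherical geometry), the segment $[\bar x,\bar y]$ remains in the $r$-ball about $\bar c$, whence $\sfd_\Y(c,\gamma_t)<r$ and $\gamma_t\in B_r(c)$. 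In summary, each step reduces to a single application of the comparison principle together with a classical fact about $M_\kappa$, and I expect the only genuine care to be required in the continuous-dependence estimate.
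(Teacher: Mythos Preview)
The paper does not supply its own proof of this proposition; it is simply recalled with a reference to \cite[Proposition~1.4]{BH}, so there is no in-paper argument to compare against. Your sketch is correct and is essentially the standard argument from that reference: degenerate comparison triangles for uniqueness, comparison with an auxiliary vertex for continuous dependence, and reduction to convexity of small balls in $M_\kappa$ for part~(2). The only remark worth making is that the paper, immediately after stating the proposition, records that \cite{BH} obtains the continuous-dependence statement via the quantitative Lipschitz bound \eqref{eq:rough3}; this is a slight sharpening of the comparison-triangle estimate you outline and is the form actually used later in the paper (in the proofs of Lemmas~\ref{lem:energy1} and~\ref{lem:energy2}).
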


\begin{rem}
In \cite{BH},
Proposition \ref{prop:cat} has been deduced from the following:
let $\gamma,\eta:[0,1]\to \Y$ be geodesics such that
\begin{equation*}
\gamma_0=\eta_0,\quad \sfd_\Y(\gamma_0,\gamma_1)+\sfd_\Y(\gamma_1,\eta_1)+\sfd_\Y(\eta_1,\eta_0)<2D_{\kappa}.
\end{equation*}
For $l\in (0,D_{\kappa})$,
we assume $\sfd_\Y(\gamma_0,\gamma_1)\leq l$ and $\sfd_\Y(\eta_0,\eta_1)\leq l$.
Then there is a constant $c_{\kappa,l}>0$ depending only on $\kappa,l$ such that
for all $t\in [0,1]$,
\begin{equation}\label{eq:rough3}
\sfd_\Y(\gamma_t,\eta_t)\leq c_{\kappa,l}\,\sfd_\Y(\gamma_1,\eta_1).
\end{equation}
\end{rem}

We now collect some useful estimates on $\CAT(1)$ spaces,
which have been stated in \cite{S} without proof.
The detailed proofs can be found in \cite[Appendix A]{BFHMSZ}.
The first one is the following (see \cite[p. 11, ESTIMATE I]{S}, and also \cite[Lemma A.2]{BFHMSZ}):
\begin{lem}[\cite{S}]\label{lem:cos1}
Let $(\Y,\sfd_\Y)$ be a $\CAT(1)$ space,
and let $\gamma,\eta:[0,1]\to \Y$ be geodesics with
\begin{equation*}
\sfd_\Y(\gamma_0,\eta_0)+\sfd_\Y(\eta_0,\eta_1)+\sfd_\Y(\eta_1,\gamma_1)+\sfd_\Y(\gamma_1,\gamma_0)<2\pi.
\end{equation*}
Then we have
\begin{align*}
\cos^2 \frac{\sfd_\Y(\gamma_0,\gamma_1)}{2}&\,\sfd^2_\Y(\gamma_{1/2},\eta_{1/2})+\frac{1}{4}(\sfd_\Y(\eta_0,\eta_1)-\sfd_\Y(\gamma_0,\gamma_1))^2\\
&\leq \frac{1}{2}\left( \sfd^2_\Y(\gamma_0,\eta_0)+\sfd^2_\Y(\gamma_1,\eta_1)  \right)\\
&+\Cub(\sfd_\Y(\gamma_0,\eta_0),\sfd_\Y(\gamma_1,\eta_1),\sfd_\Y(\eta_0,\eta_1)-\sfd_\Y(\gamma_0,\gamma_1),\sfd_\Y(\gamma_{1/2},\eta_{1/2})),
\end{align*}
where $\Cub$ denotes cubic terms in the indicated variables.
\end{lem}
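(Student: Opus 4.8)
The plan is to reduce the inequality to the model sphere $M_1$ and then verify it by an explicit computation in the ambient Euclidean space, keeping careful track of the error terms. Throughout write $a:=\sfd_\Y(\gamma_0,\gamma_1)$, $b:=\sfd_\Y(\eta_0,\eta_1)$, $\epsilon_i:=\sfd_\Y(\gamma_i,\eta_i)$ for $i=0,1$, $D:=\sfd_\Y(\gamma_{1/2},\eta_{1/2})$, and let $P<2\pi$ be the perimeter of the geodesic quadrilateral $\gamma_0\gamma_1\eta_1\eta_0$. First I would record the soft facts: each of the two ``diagonal'' distances $\sfd_\Y(\gamma_0,\eta_1)$ and $\sfd_\Y(\gamma_1,\eta_0)$ is bounded by the smaller of two complementary partial sums of $P$, hence is $<\pi$, so the quadrilateral splits along $[\gamma_0\eta_1]$ into two geodesic triangles of perimeter $\le P<2\pi$, to which Proposition \ref{prop:cat} applies. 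Applying \eqref{eq:rough3} twice — to $\gamma$ and the geodesic from $\gamma_0$ to $\eta_1$, and then to that geodesic (suitably reparametrised) and $\eta$ — also gives an a priori bound $D\le c\,(\epsilon_0+\epsilon_1)$ with $c$ depending only on an upper bound for $a$ and $b$; in particular $D$ is comparable with $\epsilon_0+\epsilon_1$, so ``cubic in the indicated variables'' is unambiguous.

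Next I would reduce to $\Y=M_1$, the round sphere. Reshetnyak's majorization theorem, applied to the quadrilateral of perimeter $P<2\pi$, produces a convex spherical quadrilateral with the same four side lengths together with a $1$-Lipschitz map onto it that is arclength-preserving on the boundary; this map sends geodesic sides to geodesic sides and carries the midpoints of the two spherical sides corresponding to $\gamma$ and $\eta$ to $\gamma_{1/2}$ and $\eta_{1/2}$. Hence $D$ is bounded above by the corresponding spherical distance while $a,b,\epsilon_0,\epsilon_1$ are preserved, so it suffices to prove the inequality on $M_1$. (Alternatively one applies Proposition \ref{prop:cat} separately to the two sub-triangles sharing the diagonal $[\gamma_0\eta_1]$ and then uses the triangle inequality at the midpoint of that diagonal.)

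On $M_1=S^2\subset\mathbb R^3$ I would pass to the ambient Euclidean picture: $\cos\sfd$ is the inner product $\langle\cdot,\cdot\rangle$, the midpoint of a short geodesic $[xy]$ equals $(x+y)/|x+y|$, and $|x+y|^2=2(1+\cos\sfd(x,y))=4\cos^2\!\bigl(\sfd(x,y)/2\bigr)$. Writing $\vec c_i:=\gamma_i-\eta_i\in\mathbb R^3$ (so $|\vec c_i|=2\sin(\epsilon_i/2)$), a short computation gives the exact identity
\begin{equation*}
\cos^2\tfrac a2\,\bigl|\gamma_{1/2}-\eta_{1/2}\bigr|^2=\frac{\cos(a/2)}{4\cos(b/2)}\Bigl(|\vec c_0|^2+|\vec c_1|^2+2\langle\vec c_0,\vec c_1\rangle-4\bigl(\cos\tfrac a2-\cos\tfrac b2\bigr)^2\Bigr),
\end{equation*}
with $|\cdot|$ the Euclidean norm, while $\bigl|\gamma_{1/2}-\eta_{1/2}\bigr|^2\le D^2\le\bigl|\gamma_{1/2}-\eta_{1/2}\bigr|^2+O(D^4)$ by the arc-versus-chord comparison. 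Writing $b=a+(b-a)$ and Taylor-expanding in $\epsilon_0,\epsilon_1,b-a$ (with $a$ a bounded parameter) then yields
\begin{equation*}
\cos^2\tfrac a2\,D^2+\tfrac14(b-a)^2=\tfrac14\Bigl(\epsilon_0^2+\epsilon_1^2+2\langle\vec c_0,\vec c_1\rangle+\cos^2\tfrac a2\,(b-a)^2\Bigr)+\Cub.
\end{equation*}
On the other hand $2\sin(b/2)-2\sin(a/2)=\cos(a/2)(b-a)+O((b-a)^2)$ combined with the reverse triangle inequality $\bigl|\,|\gamma_1-\gamma_0|-|\eta_1-\eta_0|\,\bigr|\le|\vec c_0-\vec c_1|$ gives $\cos^2\tfrac a2\,(b-a)^2\le|\vec c_0-\vec c_1|^2+\Cub=\epsilon_0^2+\epsilon_1^2-2\langle\vec c_0,\vec c_1\rangle+\Cub$. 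Substituting this into the previous display, the cross terms $\langle\vec c_0,\vec c_1\rangle$ cancel and one is left with $\cos^2(a/2)\,D^2+\tfrac14(b-a)^2\le\tfrac12(\epsilon_0^2+\epsilon_1^2)+\Cub$, which is the assertion; the quadratic part is exactly the Euclidean parallelogram identity, so everything discarded is a sum of monomials of degree $\ge3$ in $\epsilon_0,\epsilon_1,b-a$ and (via the a priori bound) $D$.

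The only genuine work is the last step: one must expand $\cos(a/2)/\cos(b/2)$, $(\cos(a/2)-\cos(b/2))^2$, the arc-versus-chord correction, and the passage from $|\vec c_0-\vec c_1|^2$ to a bound on $(b-a)^2$ carefully enough that each discarded contribution really is of degree $\ge3$ in the four designated quantities, with its coefficient controlled solely by $a$ through the factor $\cos^2(a/2)$. The reduction to $M_1$ and the a priori estimate are soft.
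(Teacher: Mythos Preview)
The paper does not give its own proof of Lemma~\ref{lem:cos1}: it is quoted from Serbinowski's thesis~\cite{S}, with detailed proofs deferred to \cite[Lemma~A.2]{BFHMSZ}. So there is nothing in the present paper to compare your argument against.

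That said, your proposal is correct and follows precisely the route one expects in the cited sources: reduce to the model sphere $M_1$ via Reshetnyak majorization, then compute explicitly in $\mathbb R^3$. Your exact identity is right: since $|\gamma_0+\gamma_1|=2\cos(a/2)=:\alpha$, $|\eta_0+\eta_1|=2\cos(b/2)=:\beta$ and $\langle\gamma_0+\gamma_1,\eta_0+\eta_1\rangle=\alpha\beta\cos D$, one has $|\vec c_0+\vec c_1|^2-(\alpha-\beta)^2=2\alpha\beta(1-\cos D)$, from which your displayed formula for $\cos^2(a/2)\,|\gamma_{1/2}-\eta_{1/2}|^2$ follows immediately. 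The Taylor expansion and the reverse-triangle step then go through exactly as you outline, and your a~priori bound $D\le c(\epsilon_0+\epsilon_1)$ via two applications of~\eqref{eq:rough3} through the diagonal $[\gamma_0\eta_1]$ is the standard way to close the argument.

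One remark worth making explicit, since the paper relies on it in Remark~\ref{rem:error} and in the proof of Lemma~\ref{lem:energy1}: your error terms actually satisfy the stronger property that each cubic monomial carries some variable with exponent $\ge 2$. The only place this is not obvious is the correction coming from $\alpha/(4\beta)\approx 1/4$; but that correction multiplies $|\vec c_0+\vec c_1|^2-(\alpha-\beta)^2=2\alpha\beta(1-\cos D)=O(D^2)$, so it is $O\bigl((b-a)D^2\bigr)$ rather than merely $O\bigl((b-a)(\epsilon_0+\epsilon_1)^2\bigr)$. All the other discarded pieces --- the arc--chord correction $O(D^4)$, the $O(\epsilon_i^4)$ from $|\vec c_i|^2=\epsilon_i^2+O(\epsilon_i^4)$, the $O((b-a)^3)$ from expanding $(\alpha-\beta)^2$, and the $O\bigl((\epsilon_0+\epsilon_1)(b-a)^2\bigr)$ from the reverse-triangle step --- visibly have this form.
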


The second one is the following (see \cite[p. 13, ESTIMATE II]{S}, and also \cite[Lemma A.4]{BFHMSZ}):
\begin{lem}[\cite{S}]\label{lem:cos2}
Let $(\Y,\sfd_\Y)$ be a $\CAT(1)$ space,
and let $\gamma,\eta:[0,1]\to \Y$ be geodesics with $\gamma_1=\eta_1$ and
\begin{equation*}
\sfd_\Y(\gamma_0,\eta_0)+\sfd_\Y(\eta_0,\eta_1)+\sfd_\Y(\eta_1,\gamma_0)<2\pi.
\end{equation*}
Then for every $t,s\in [0,1]$ we have
\begin{align*}
\sfd^2_\Y(\gamma_t,\eta_s)&\leq \frac{\sin^2(1-t)\sfd_\Y(\gamma_0,\gamma_1)}{\sin^2 \sfd_\Y(\gamma_0,\gamma_1) }\left(\sfd^2_{\Y}(\gamma_0,\eta_0)-(\sfd_\Y(\eta_0,\eta_1)-\sfd_\Y(\gamma_0,\gamma_1))^2 \right)\\
&+(1-t)^2(\sfd_\Y(\eta_0,\eta_1)-\sfd_\Y(\gamma_0,\gamma_1))^2+\sfd^2_\Y(\gamma_0,\gamma_1)(s-t)^2\\
&-2(1-t)(s-t)\sfd_\Y(\gamma_0,\gamma_1)(\sfd_\Y(\eta_0,\eta_1)-\sfd_\Y(\gamma_0,\gamma_1))\\
&+\Cub(s-t,\sfd_{\Y}(\gamma_0,\eta_0),\sfd_\Y(\eta_0,\eta_1)-\sfd_\Y(\gamma_0,\gamma_1),\sfd_\Y(\gamma_t,\eta_s)).
\end{align*}
\end{lem}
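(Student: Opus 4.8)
The plan is to push the inequality onto the model sphere $M_1$ and there reduce it to spherical trigonometry and a Taylor expansion. The key point is that the hypothesis $\eta_1=\gamma_1$ makes $\gamma_0,\eta_0,\gamma_1$ the vertices of a geodesic triangle $\triangle_{\gamma_0\eta_0\gamma_1}$ --- with sides $[\gamma_0,\gamma_1]$, $[\gamma_1,\eta_0]=[\eta_1,\eta_0]$ and $[\eta_0,\gamma_0]$ --- whose perimeter equals $\sfd_\Y(\gamma_0,\eta_0)+\sfd_\Y(\eta_0,\eta_1)+\sfd_\Y(\eta_1,\gamma_0)<2\pi=2D_1$; by the triangle inequality this forces $\sfd_\Y(\gamma_0,\gamma_1),\sfd_\Y(\eta_0,\eta_1),\sfd_\Y(\gamma_0,\eta_0)<\pi$, and we may assume $\sfd_\Y(\gamma_0,\gamma_1),\sfd_\Y(\eta_0,\eta_1)\in(0,\pi)$, the remaining cases being trivial or obtained by continuity. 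Although the $\kappa$-triangle comparison principle is stated above only for a vertex and a point on the opposite side, it implies the analogous bound between any two points of a geodesic triangle of perimeter $<2D_\kappa$ (standard; see \cite{BH}). Applying this to $\triangle_{\gamma_0\eta_0\gamma_1}$ and its comparison triangle $\bar\triangle\subset M_1$, with vertices $\bar\gamma_0,\bar\eta_0,\bar q$, we get $\sfd_\Y(\gamma_t,\eta_s)\le\sfd_{M_1}(P,P')$, where $P\in[\bar\gamma_0,\bar q]$ and $P'\in[\bar\eta_0,\bar q]$ are the comparison points of $\gamma_t\in[\gamma_0,\gamma_1]$ and $\eta_s\in[\eta_0,\eta_1]$. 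Since the three distances on the right-hand side of the asserted estimate are exactly the side lengths of $\triangle_{\gamma_0\eta_0\gamma_1}$, they are unchanged in $\bar\triangle$, so it suffices to bound $\sfd_{M_1}(P,P')^2$.

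Set $a:=\sfd_\Y(\gamma_0,\gamma_1)$, $b:=\sfd_\Y(\eta_0,\eta_1)$, $c:=\sfd_\Y(\gamma_0,\eta_0)$, let $\theta$ be the angle of $\bar\triangle$ at $\bar q$, and note that $P$ lies at spherical distance $(1{-}t)a$ and $P'$ at distance $(1{-}s)b$ from $\bar q$. Two applications of the spherical law of cosines give
\[
\cos c=\cos a\cos b+\sin a\sin b\cos\theta,\qquad
\cos\sfd_{M_1}(P,P')=\cos\!\big((1{-}t)a\big)\cos\!\big((1{-}s)b\big)+\sin\!\big((1{-}t)a\big)\sin\!\big((1{-}s)b\big)\cos\theta,
\]
and eliminating $\cos\theta$ expresses $\cos\sfd_{M_1}(P,P')$ explicitly in $a,b,c,t,s$. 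Next, using the elementary inequality $x^2\le 2(1-\cos x)+\tfrac1{12}x^4$ valid for all real $x$ (equivalently $1-\cos x\ge\tfrac12 x^2-\tfrac1{24}x^4$, obtained from $\cos x\ge 1-\tfrac12 x^2$ by integrating twice), applied to $x=\sfd_\Y(\gamma_t,\eta_s)$, together with $\sfd_\Y(\gamma_t,\eta_s)\le\sfd_{M_1}(P,P')\le\pi$ and monotonicity of $\cos$ on $[0,\pi]$, we obtain
\[
\sfd_\Y(\gamma_t,\eta_s)^2\le 2\big(1-\cos\sfd_\Y(\gamma_t,\eta_s)\big)+\tfrac1{12}\sfd_\Y(\gamma_t,\eta_s)^4\le 2\big(1-\cos\sfd_{M_1}(P,P')\big)+\tfrac1{12}\sfd_\Y(\gamma_t,\eta_s)^4.
\]
The last term is already of the form $\Cub\big(\sigma,c,\delta,\sfd_\Y(\gamma_t,\eta_s)\big)$ --- which is why $\sfd_\Y(\gamma_t,\eta_s)$ figures among the variables of the cubic remainder --- so it remains only to expand $2\big(1-\cos\sfd_{M_1}(P,P')\big)$.

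This expansion is the heart of the argument. Regard $a\in(0,\pi)$ and $t\in[0,1]$ as parameters and let $c$, $\delta:=b-a=\sfd_\Y(\eta_0,\eta_1)-\sfd_\Y(\gamma_0,\gamma_1)$ and $\sigma:=s-t$ be the quantities allowed to be small (note $|\delta|\le c$ by the triangle inequality in $\triangle_{\gamma_0\eta_0\gamma_1}$). Writing $(1{-}s)b=(1{-}t)a+\nu$ with $\nu=(1{-}t)\delta-\sigma a-\sigma\delta$, expanding $\cos\!\big((1{-}t)a+\nu\big)$ and $\sin\!\big((1{-}t)a+\nu\big)$ by the addition formulas, and using the identity $1-\cos\theta=\dfrac{c^2-\delta^2}{2\sin^2 a}+O(\text{order}\ge 3)$ --- the algebraic fact responsible for the combination $c^2-\delta^2$ in the leading term --- a direct computation shows that the terms of orders $0$ and $1$ in $(c,\delta,\sigma)$ cancel, that the second-order terms sum to exactly
\[
\frac{\sin^2\!\big((1{-}t)a\big)}{\sin^2 a}\big(c^2-\delta^2\big)+(1{-}t)^2\delta^2+a^2\sigma^2-2(1{-}t)\sigma a\,\delta,
\]
which is precisely the first three lines of the asserted estimate, and that the remaining terms are of order $\ge 3$ in $(c,\delta,\sigma)$, hence absorbed into $\Cub\big(\sigma,c,\delta,\sfd_\Y(\gamma_t,\eta_s)\big)$. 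Together with the previous display this gives the estimate.

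The only real obstacle is the bookkeeping in the last step: one must verify that the second-order part of $2\big(1-\cos\sfd_{M_1}(P,P')\big)$ is \emph{exactly} the displayed quadratic expression --- so that no quadratic term leaks into the cubic error --- and that all higher-order contributions are uniformly controlled on the relevant ranges. The reduction to $M_1$, the cosine inequality, and the law of cosines are routine; and the hypothesis $\gamma_1=\eta_1$ enters precisely to place both $\gamma_t$ and $\eta_s$ on one geodesic triangle, so that the $\CAT(1)$ comparison applies in a single step rather than being iterated.
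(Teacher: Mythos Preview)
The paper does not give its own proof of this lemma; it is quoted from \cite{S} with a pointer to \cite[Lemma~A.4]{BFHMSZ} for a detailed argument. Your approach---reduce to the comparison triangle in $M_1$ via the $\CAT(1)$ comparison principle, apply the spherical law of cosines twice to eliminate the apex angle $\theta$, and Taylor-expand $2\big(1-\cos\sfd_{M_1}(P,P')\big)$ in the small quantities $c$, $\delta=b-a$, $\sigma=s-t$---is correct and is precisely the route taken in those references; in particular your identity $1-\cos\theta=(\cos\delta-\cos c)/(\sin a\sin b)$ is what makes the combination $c^2-\delta^2$ emerge in the leading term. One small addendum worth making explicit: the paper later relies (Remark~\ref{rem:error}) on the structural fact that every monomial appearing in $\Cub$ carries at least one variable to power $\ge 2$, and this is what allows the error terms in Lemmas~\ref{lem:energy1} and~\ref{lem:energy2} to be controlled; your expansion does produce terms of this form (the $\tfrac{1}{12}\sfd_\Y(\gamma_t,\eta_s)^4$ contribution trivially, and the trigonometric remainder by inspection of the Taylor series of $1-\cos\nu$ and $\cos\delta-\cos c$), but you should record this rather than leave it implicit.
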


\begin{rem}\label{rem:error}
In Lemmas \ref{lem:cos1} and \ref{lem:cos2},
we further see that
in each term of $\Cub$,
at least one of the variables has an exponent which is greater than or equal to two. 
\end{rem}

\subsection{Energy estimates}\label{sec:key}

In this subsection
we are going to obtain some energy estimates which will be exploited in the proof of our main theorem.
Let $(\X,\d,\mm)$ be a locally uniformly doubling, strongly rectifiable space satisfying a Poincar\'e inequality,
and let $\Omega$ be an open subset of $X$.
Let $\Y_\o=(\Y,\sfd_\Y,\o)$ be a pointed $\CAT(1)$ space.
For $\rho \in (0,\pi/2)$,
we denote by $\bar{\B}_{\rho}(\o)$ the closed ball in $\Y$ of radius $\rho$ centered at $\o$.
In view of Proposition \ref{prop:cat},
$\bar{\B}_{\rho}(\o)=(\bar{\B}_{\rho}(\o),\sfd_\Y,\o)$ is a pointed $\CAT(1)$ space with itself.
We call $\bar{\B}_{\rho}(\o)$ a \textit{regular ball}.

For $u,v\in L^0(\Omega,\bar{\B}_{\rho}(\o))$ and $t\in [0,1]$,
we define a map $\G^{u,v}_t:\Omega \to \bar{\B}_{r}(\o)$ by $x\mapsto \G^{u(x),v(x)}_t$,
where $\G^{u(x),v(x)}$ denotes the unique geodesic from $u(x)$ to $v(x)$.
By virtue of Proposition \ref{prop:cat},
$\G^{u,v}_t$ belongs to $L^0(\Omega,\bar{\B}_{\rho}(\o))$.
By the same argument as in \cite[Section 6]{GT2},
we see that if $u,v\in  L^2(\Omega,\bar{\B}_{\rho}(\o))$, then $\G^{u,v}_t\in L^2(\Omega,\bar{\B}_{\rho}(\o))$,
and it is the unique geodesic from $u$ to $v$.

%
%

We begin with the following energy density estimate (cf. \cite[Lemma 1.13]{S}):
\begin{lem}\label{lem:energy1}
Let $u,v\in \ks^{1,2}(\Omega,\bar{\B}_{\rho}(\o)), m:=\G^{u,v}_{1/2}$ and $\sfd:=\sfd_\Y(u,v)$.
Then we have $m \in \ks^{1,2}(\Omega,\bar{\B}_{\rho}(\o)), \sfd \in \ks^{1,2}(\Omega,\mathbb{R})$
and
\begin{equation}\label{eq:energy12}
\cos^2 \frac{\sfd}{2}\, \e^2_2[m]+\frac{1}{4}\e^2_2[\sfd]\leq \frac{1}{2}\left( \e^2_2[u]+\e^2_{2}[v] \right) \quad \text{$\mm$-a.e. on $\Omega$}.
\end{equation}
\end{lem}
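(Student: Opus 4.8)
The plan is to transfer the pointwise $\CAT(1)$ comparison of Lemma \ref{lem:cos1} to an almost-everywhere inequality between energy densities by integrating against the modified approximate energy of Lemma \ref{lem:tech}. First I would record the membership statements: since $\sfd_\Y(\cdot,\o)$ is $1$-Lipschitz and $u,v\in\ks^{1,2}$, we get $\sfd=\sfd_\Y(u,v)\in\ks^{1,2}(\Omega,\mathbb R)$ from the characterization $\ks^{1,2}=W^{1,2}(\Omega,\Y_\o)$ in Theorem \ref{thm:openenerygy}(1) together with the $1$-Lipschitz bound $|\sfd_\Y(u(x),u(y))-\sfd_\Y(v(x),v(y))|\le \sfd_\Y(u(x),v(x))+\sfd_\Y(u(y),v(y))$; that $m=\G^{u,v}_{1/2}\in L^2(\Omega,\bar\B_\rho(\o))$ follows from the discussion just before the lemma (the argument of \cite[Section 6]{GT2}), and $m\in\ks^{1,2}$ will actually fall out of the energy estimate once we prove it, since the right-hand side of \eqref{eq:energy12} is integrable and $\cos^2(\sfd/2)\ge\cos^2\rho>0$ on a regular ball.

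The core step is the pointwise estimate. Fix $x\in\Omega$ a point at which all the relevant densities exist and are realized as limits: $\kse_{2,r}[u,\Omega](x)\to\e_2[u](x)$, similarly for $v$, $\sfd$, and $m$, and also $\kse_{2,r}[\sfd,\Omega;u,v,\alpha](x)\to\e_2[\sfd](x)$ and $\kse_{2,r}[m,\Omega;u,v,\alpha](x)\to\e_2[m](x)$ via Lemma \ref{lem:tech} (taking there $w=v$, or more precisely applying the lemma with the pair of "controlling" maps being $u$ and $v$); this is a full-measure set. For $y\in B_r(x)\setminus\Omega_{u,v,\alpha,x}$, both $\sfd_\Y(u(x),u(y))<\alpha$ and $\sfd_\Y(v(x),v(y))<\alpha$, so the geodesics $\gamma=\G^{u(x),v(x)}$ and $\eta=\G^{u(y),v(y)}$ have perimeter of the quadrilateral bounded by $2\rho+2\alpha$; choosing $\alpha<\pi-2\rho$ we may apply Lemma \ref{lem:cos1} to $\gamma,\eta$, which gives
\begin{equation*}
\cos^2\frac{\sfd(x)}{2}\,\sfd_\Y^2(m(x),m(y))+\frac14\bigl(\sfd(y)-\sfd(x)\bigr)^2\le\frac12\bigl(\sfd_\Y^2(u(x),u(y))+\sfd_\Y^2(v(x),v(y))\bigr)+\Cub.
\end{equation*}
I would then divide by $r^2\,\mm(B_r(x))$, integrate over $y\in B_r(x)\setminus\Omega_{u,v,\alpha,x}$, and read off
\begin{equation*}
\cos^2\frac{\sfd(x)}{2}\,\kse_{2,r}^2[m,\Omega;u,v,\alpha](x)+\frac14\kse_{2,r}^2[\sfd,\Omega;u,v,\alpha](x)\le\frac12\bigl(\kse_{2,r}^2[u,\Omega](x)+\kse_{2,r}^2[v,\Omega](x)\bigr)+E_r(x),
\end{equation*}
where $E_r(x)$ is the integrated cubic error term. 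Letting $r\downarrow0$ and using the convergences above yields \eqref{eq:energy12}, provided $E_r(x)\to0$; the inequality then holds $\mm$-a.e., and is independent of the auxiliary parameter $\alpha$ since $\alpha$ has disappeared in the limit.

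The main obstacle is controlling the cubic error $E_r(x)\to0$. Here Remark \ref{rem:error} is the key input: in each monomial of $\Cub$ at least one variable appears to a power $\ge2$. Three of the four variables $\sfd_\Y(u(x),u(y)),\sfd_\Y(v(x),v(y)),\sfd(y)-\sfd(x)$ are each $\le C\sfd(x,y)\cdot(\text{bounded quantity})$ on the good set (using that $\lip u(x),\lip v(x)<\infty$ and the Lipschitz representative argument from the proof of Lemma \ref{lem:tech}, which also gives $|\sfd(y)-\sfd(x)|\le\sfd_\Y(u(x),u(y))+\sfd_\Y(v(x),v(y))$), while the fourth, $\sfd_\Y(m(x),m(y))$, is controlled by $c_{1,\rho}\max(\sfd_\Y(u(x),u(y)),\sfd_\Y(v(x),v(y)))$ via \eqref{eq:rough3}; hence every term of the cubic form is bounded by $C\sfd^2(x,y)\cdot\eta_r(y)$ with $\eta_r(y)\to0$ as $y\to x$ (one extra factor of a quantity tending to $0$), so after dividing by $r^2$ and averaging over $B_r(x)$ the contribution vanishes as $r\downarrow0$. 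I would also need to note that restricting the integral to the complement of $\Omega_{u,v,\alpha,x}$ only removes mass, which is compatible with the direction of the inequality on the left-hand side (the terms there are nonnegative) but requires Lemma \ref{lem:tech} on the left to recover the true densities in the limit — that is precisely why the modified approximate energy was introduced. Finally, the $\ks^{1,2}$-membership of $m$ follows a posteriori by integrating \eqref{eq:energy12} over $\Omega$ and using $\cos^2(\sfd/2)\ge\cos^2\rho>0$.
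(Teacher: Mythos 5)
Your overall strategy --- the pointwise comparison of Lemma \ref{lem:cos1} on the good set $B_r(x)\setminus\Omega_{u,v,\alpha,x}$, passage to the limit through the modified approximate energies of Lemma \ref{lem:tech}, and control of the cubic error via Remark \ref{rem:error} and the a.e.\ finiteness of pointwise Lipschitz constants --- is the paper's. But there is a genuine gap in how you handle the membership $m\in\ks^{1,2}(\Omega,\bar{\B}_{\rho}(\o))$: you propose to deduce it \emph{a posteriori} from \eqref{eq:energy12}, yet your derivation of \eqref{eq:energy12} already presupposes it. The convergence $\kse_{2,r}[m,\Omega;u,v,\alpha](x)\to\e_2[m](x)$ that you invoke is Lemma \ref{lem:tech} applied with $m$ in the role of its first map, and that lemma assumes all three maps lie in $\ks^{1,2}$; moreover the energy density $\e_2[m]$ itself only exists, by Theorem \ref{thm:openenerygy}, for maps in $\ks^{1,2}$ (and the Lusin--Lipschitz ingredients used in the limiting argument are likewise consequences of membership). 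So, as written, you cannot even state, let alone pass to the limit in, the left-hand side of your approximate inequality without first knowing $m\in\ks^{1,2}$: the a posteriori route is circular.

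The gap is fixable, and the fix is exactly the step the paper performs first, before any comparison argument: for $y\notin\Omega_{u,v,\alpha,x}$ (with $\alpha$ chosen so that the relevant perimeters are below $2\pi$) apply \eqref{eq:rough3} twice through the midpoint of $u(y)$ and $v(x)$ to get $\sfd_\Y(m(x),m(y))\le c_\rho\left(\sfd_\Y(u(x),u(y))+\sfd_\Y(v(x),v(y))\right)$, while on $B_r(x)\cap\Omega_{u,v,\alpha,x}$ use the trivial bound $\sfd_\Y(m(x),m(y))\le 2\rho$ together with the Chebyshev-type estimate $\frac{\alpha^2}{r^2}\,\frac{\mm(B_r(x)\cap\Omega_{u,v,\alpha,x})}{\mm(B_r(x))}\le \kse^2_{2,r}[u,\Omega](x)+\kse^2_{2,r}[v,\Omega](x)$. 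This gives $\kse^2_{2,r}[m,\Omega]\le C_\rho\left(\kse^2_{2,r}[u,\Omega]+\kse^2_{2,r}[v,\Omega]\right)$, and integrating over $\Omega$ and letting $r\downarrow0$ yields $m\in\ks^{1,2}$ directly; only then may Lemma \ref{lem:tech} and the density $\e_2[m]$ be used as you do. (Two minor points: the quadrilateral perimeter on the good set is bounded by $4\rho+2\alpha$, not $2\rho+2\alpha$, though your condition $\alpha<\pi-2\rho$ is still the correct one; and your bound on the error factor $\sfd_\Y(m(x),m(y))$ via \eqref{eq:rough3} is a fine substitute for the paper's use of $\lip m(x)<+\infty$.) With the membership step inserted, the rest of your argument coincides with the paper's proof.
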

\begin{proof}
We check $m \in \ks^{1,2}(\Omega,\bar{\B}_{\rho}(\o)), \sfd \in \ks^{1,2}(\Omega,\mathbb{R})$.
The claim for $\sfd$ immediately follows from the triangle inequality.
For what concerns $m$ let us set $\alpha:=2(\pi-2\rho)>0$.
For $x\in \Omega$,
we define $\Omega_{u,\alpha,x},\Omega_{v,\alpha,x},\Omega_{u,v,\alpha,x}$ as in \eqref{eq:tecset}.
For all $y\in \Omega \setminus \Omega_{u,v,\alpha,x}$,
we have
\begin{align*}
\sfd_\Y(u(x),v(x))+\sfd_\Y(v(x),u(y))+\sfd_\Y(u(y),u(x))< 4\rho+\alpha=2\pi,\\
\sfd_\Y(v(y),u(y))+\sfd_\Y(u(y),v(x))+\sfd_\Y(v(x),v(y))< 4\rho+\alpha=2\pi.
\end{align*}
Therefore,
by using \eqref{eq:rough3} for $l=2\rho$ twice via the midpoint of $u(y)$ and $v(x)$,
for all $y\in \Omega \setminus \Omega_{u,v,\alpha,x}$,
\begin{equation*}
\sfd_\Y(m(x),m(y))\leq c_{\rho}(\sfd_\Y(u(x),u(y))+\sfd_\Y(v(x),v(y))
\end{equation*}
for some constant $c_\rho>0$ depending only on $\rho$.
This implies
\begin{align}\label{eq:energy11}
\kse^2_{2,r}[m,\Omega](x)&\leq \frac{2c^2_\rho}{\mm(B_r(x))} \int_{B_r(x) \setminus \Omega_{u,v,\alpha,x}}\, \left(\frac{\sfd^2_\Y(u(x),u(y))}{r^2}+\frac{\sfd^2_\Y(v(x),v(y))}{r^2}\right)\,\d \mm(y)\\ \notag
&+ \frac{1}{\mm(B_r(x))} \int_{B_r(x) \cap \Omega_{u,v,\alpha,x}}\, \frac{\sfd^2_\Y(m(x),m(y))}{r^2}\,\d \mm(y)\\ \notag
&\leq 2c^2_\rho\,\left( \kse^2_{2,r}[u,\Omega](x)+\kse^2_{2,r}[v,\Omega](x)\right)+ \frac{4\rho^2}{r^2} \frac{\mm(B_r(x) \cap \Omega_{u,v,\alpha,x})}{\mm(B_r(x))}\\ \notag
&\leq \left(2c^2_\rho+\frac{4\rho^2}{\alpha^2}\right)\,\left( \kse^2_{2,r}[u,\Omega](x)+\kse^2_{2,r}[v,\Omega](x)\right),
\end{align}
where we used
\begin{align*}
\frac{\alpha^2}{r^2} \frac{\mm(B_r(x)\cap \Omega_{u,v,\alpha,x})}{\mm(B_r(x))}&\leq \frac{\alpha^2}{r^2} \left(\frac{\mm(B_r(x)\cap \Omega_{u,\alpha,x})}{\mm(B_r(x))}+\frac{\mm(B_r(x)\cap \Omega_{v,\alpha,x})}{\mm(B_r(x))}\right)\\
&\leq \kse^2_{2,r}[u,\Omega](x)+\kse^2_{2,r}[v,\Omega](x).
\end{align*}
Integrating \eqref{eq:energy11} over $\Omega$,
and letting $r\downarrow 0$,
we deduce $m \in \ks^{1,2}(\Omega,\bar{\B}_{\rho}(\o))$.

We prove \eqref{eq:energy12}.
Now let us set $\beta:=\pi-2\rho>0$ and for $x\in \Omega$ define $\Omega_{u,\beta,x},\Omega_{v,\beta,x},\Omega_{u,v,\beta,x}$ as in \eqref{eq:tecset} again.
For all $y\in \Omega \setminus \Omega_{u,v,\beta,x}$,
there holds
\begin{equation*}
\sfd_\Y(u(x),v(x))+\sfd_\Y(v(x),v(y))+\sfd_\Y(v(y),u(y))+\sfd_\Y(u(y),u(x))< 2\beta+4\rho=2\pi.
\end{equation*}
Lemma \ref{lem:cos1} tells us that for all $y\in \Omega \setminus \Omega_{u,v,\beta,x}$,
\begin{align*}
\cos^2\frac{\sfd(x)}{2}&\,\sfd^2_\Y(m(x),m(y))+\frac{1}{4}(\sfd(x)-\sfd(y))^2\\
&\leq \frac{1}{2} \left(\sfd^2_\Y(u(x),u(y))+\sfd^2_\Y(v(x),v(y)) \right)\\ 
&+\Cub(\sfd_\Y(u(x),u(y)),\sfd_\Y(v(x),v(y)),\sfd(x)-\sfd(y),\sfd_\Y(m(x),m(y))).
\end{align*}
We shall write the last term as $\Cub(x,y)$ for short.
It follows that
\begin{align}\label{eq:desired}
\cos^2\frac{\sfd(x)}{2}&\,\kse^2_{2,r}[m,\Omega;u,v,\beta](x)+\frac{1}{4}\kse^2_{2,r}[\sfd,\Omega;u,v,\beta](x)\\ \notag
&\leq \frac{1}{2}\left( \kse^2_{2,r}[u,\Omega](x)+\kse^2_{2,r}[v,\Omega](x) \right)+\frac{1}{\mm(B_r(x))}\int_{B_r(x) \setminus \Omega_{u,v,\beta,x}}\,\frac{\Cub(x,y)}{r^2}\,\d \mm(y), \notag
\end{align}
where $\kse^2_{2,r}[m,\Omega;u,v,\beta](x)$ and $\kse^2_{2,r}[\sfd,\Omega;u,v,\beta](x)$ are defined as \eqref{eq:tecfct}.

By Theorem \ref{thm:openenerygy} and Lemma \ref{lem:tech},
and by the same argument in the proof of Lemma \ref{lem:tech},
$\mm$-a.e. $x\in \Omega$ satisfies the following properties:
\begin{enumerate}\setlength{\itemsep}{+0.7mm}
\item $\kse_{2,r}[m,\Omega;u,v,\beta](x)\to \e_2[m](x), \kse_{2,r}[\sfd,\Omega;u,v,\beta](x) \to \e_2[\sfd](x)$ as $r\downarrow0$;
\item $\kse_{2,r}[u,\Omega](x)\to \e_2[u](x), \kse_{2,r}[v,\Omega](x)\to \e_2[v](x)$ as $r\downarrow0$;
\item $\lip u(x),\lip v(x),\lip \sfd(x),\lip m(x)<+\infty$.
\end{enumerate}
We now fix such a point $x\in \Omega$.
Let us verify
\begin{equation}\label{eq:error term}
\frac{1}{\mm(B_r(x))}\int_{B_r(x) \setminus \Omega_{u,v,\beta,x}}\,\frac{\Cub(x,y)}{r^2}\,\d \mm(y)\to 0
\end{equation}
as $r\downarrow0$.
Each term in \eqref{eq:error term} can be written as
\begin{equation}\label{eq:error term2}
\frac{f(x)}{\mm(B_r(x))}\int_{B_r(x) \setminus \Omega_{u,v,\beta,x}}\,\frac{\sfd^{\theta_1}_\Y(u(x),u(y))\sfd^{\theta_2}_\Y(v(x),v(y))(\sfd(x)-\sfd(y))^{\theta_3}\sfd^{\theta_4}_\Y(m(x),m(y))}{r^2}\,\d \mm(y)
\end{equation}
for $\theta_1,\theta_2,\theta_3,\theta_4\geq 0$ such that $\theta_1+\theta_2+\theta_3+\theta_4\geq 3$ and at least one of $i=1,\dots,4$ has $\theta_i\geq 2$ (see Remark \ref{rem:error}).
Let us assume $\theta_3\geq 2$ since the other cases can be handled in a similar way.
For the absolute value of \eqref{eq:error term2} we have
\begin{align}\label{eq:error3}
&\frac{|f(x)|}{\mm(B_r(x))}\int_{B_r(x)\setminus \Omega_{u,v,\beta,x}}\,\frac{\sfd^{\theta_1}_\Y(u(x),u(y))\sfd^{\theta_2}_\Y(v(x),v(y)) |\sfd(x)-\sfd(y)|^{\theta_3}\sfd^{\theta_4}_\Y(m(x),m(y))}{r^2}\,\d \mm(y)\\ \notag
&\leq |f(x)|\,r^{\theta_1+\theta_2+\theta_3+\theta_4-2}\,\kse^2_{2,r}[\sfd,\Omega;u,v,\beta](x) \,I,
\end{align}
where
\begin{equation*}
I:=\sup_{y\in B_r(x)\setminus \{x\}} \left(\frac{\sfd^{\theta_1}_\Y(u(x),u(y))}{\sfd^{\theta_1}(x,y)}\frac{\sfd^{\theta_2}_\Y(u(x),u(y))}{\sfd^{\theta_2}(x,y)}\frac{|\sfd(x)-\sfd(y)|^{\theta_{3}-2}}{\sfd^{\theta_3-2}(x,y)}\frac{\sfd^{\theta_4}_\Y(m(x),m(y))}{\sfd^{\theta_4}(x,y)}\right).
\end{equation*}
By $\theta_1+\theta_2+\theta_3+\theta_4-2\geq 1$ and by the choice of $x$,
the right hand side of \eqref{eq:error3} goes to $0$ as $r\downarrow 0$.
Thus,
we confirm the validity of \eqref{eq:error term}.
Letting $r\downarrow 0$ in \eqref{eq:desired},
we can derive the desired assertion.
\end{proof}

We next prove the following (cf. \cite[Lemma 1.14]{S}):
\begin{lem}\label{lem:energy2}
Assume that
$(\X,\sfd,\mm)$ is infinitesimally Hilbertian.
For $u\in \ks^{1,2}(\Omega,\bar{\B}_{\rho}(\o)),\eta\in \ks^{1,2}(\Omega,[0,1])$,
we define $u_{\eta}:\Omega \to \bar{\B}_{\rho}(\o)$ by $u_{\eta}:=\G^{u,\o}_{\eta}$ and $\sfd:=\sfd_\Y(u,\o)$.
Then we have $u_{\eta}\in \ks^{1,2}(\Omega,\bar{\B}_{\rho}(\o)), \sfd \in \ks^{1,2}(\Omega,\mathbb{R})$ and 
\begin{equation*}
\e^2_{2}[u_{\eta}]\leq \frac{\sin^2(1-\eta)\sfd}{\sin^2 \sfd }  \left( \e^2_{2}[u]-\e^2_{2}[\sfd] \right)+\e^2_2[(1-\eta)\sfd]\quad \text{$\mm$-a.e. on $\Omega$}.
\end{equation*}
\end{lem}
\begin{proof}
Let us verify $u_{\eta}\in \ks^{1,2}(\Omega,\bar{\B}_{\rho}(\o)), \sfd \in \ks^{1,2}(\Omega,\mathbb{R})$.
Similarly to the proof of Lemma \ref{lem:energy1},
the thesis for $\sfd$ follows from the triangle inequality.
By virtue of \eqref{eq:rough3} for $l=\rho$,
for all $x,y\in \Omega$,
\begin{align*}
\sfd_\Y(u_\eta(x),u_{\eta}(y))&\leq c_{\rho}\,\sfd_\Y(u(x),u(y))+|\eta(x)-\eta(y)|\sfd_\Y(u(y),\o)\\
&\leq c_{\rho}\,\sfd_\Y(u(x),u(y))+\rho|\eta(x)-\eta(y)|
\end{align*}
for some constant $c_\rho>0$ depending only on $\rho$.
Therefore,
\begin{equation*}
\kse^2_{2,r}[u_{\eta},\Omega](x)\leq 2c^2_{\rho}\,\kse^2_{2,r}[u,\Omega](x)+2\rho^2\kse^2_{2,r}[\eta,\Omega](x).
\end{equation*}
Integrating this estimate over $\Omega$,
and letting $r\downarrow 0$,
we conclude $u_\eta \in \ks^{1,2}(\Omega,\bar{\B}_{\rho}(\o))$.

Using Lemma \ref{lem:cos2},
for all $x,y\in \Omega$ we have
\begin{align*}
\sfd^2_\Y(u_{\eta}(x),u_{\eta}(y))&\leq \frac{\sin^2(1-\eta(x))\sfd(x)}{\sin^2 \sfd(x) }  \left(\sfd^2_{\Y}(u(x),u(y))- (\sfd(y)-\sfd(x))^2 \right)\\
&+(1-\eta(x))^2(\sfd(y)-\sfd(x))^2+\sfd^2(x)(\eta(y)-\eta(x))^2\\
&-2(1-\eta(x))\sfd(x)(\eta(y)-\eta(x))(\sfd(y)-\sfd(x))\\
&+\Cub(\eta(y)-\eta(x),\sfd_\Y(u(x),u(y)),\sfd(y)-\sfd(x),\sfd_\Y(u_{\eta}(x),u_{\eta}(y))).
\end{align*}
Setting $\xi:=1-\eta$,
we also have
\begin{align}\label{eq:energy51}
\sfd^2_\Y(u_{\eta}(x),u_{\eta}(y))&\leq \frac{\sin^2\xi(x)\sfd(x)}{\sin^2 \sfd(x) }  \left(\sfd^2_{\Y}(u(x),u(y))-(\sfd(y)-\sfd(x))^2 \right)\\ \notag
&+\xi^2(x)(\sfd(y)-\sfd(x))^2+\sfd^2(x)(\xi(y)-\xi(x))^2\\ \notag
&+2\xi(x)\sfd(x)(\xi(y)-\xi(x))(\sfd(y)-\sfd(x))\\ \notag
&+\Cub(\xi(x)-\xi(y),\sfd_\Y(u(x),u(y)),\sfd(y)-\sfd(x),\sfd_\Y(u_{\eta}(x),u_{\eta}(y)))\\ \notag
&= \frac{\sin^2\xi(x)\sfd(x)}{\sin^2 \sfd(x) }  \left(\sfd^2_{Y}(u(x),u(y))-(\sfd(y)-\sfd(x))^2 \right)\\ \notag
&+\xi^2(x)(\sfd(y)-\sfd(x))^2+\sfd^2(x)(\xi(y)-\xi(x))^2\\ \notag
&+\frac{\xi(x)\sfd(x)}{2}\left[ \{(\xi+\sfd)(x)-(\xi+\sfd)(y)\}^2- \{(\xi-\sfd)(x)-(\xi-\sfd)(y)\}^2  \right]\\ \notag
&+\Cub(\xi(x)-\xi(y),\sfd_\Y(u(x),u(y)),\sfd(y)-\sfd(x),\sfd_\Y(u_{\eta}(x),u_{\eta}(y))).
\end{align}
Dividing the later expression by $r^2$
and integrating in $y$ over $B_r(x)$ lead to
\begin{align*}
\kse^2_{2,r}[u_{\eta},\Omega](x)&\leq \frac{\sin^2\xi(x)\sfd(x)}{\sin^2 \sfd(x) }  \left( \kse^2_{2,r}[u,\Omega](x)-\kse^2_{2,r}[\sfd,\Omega](x) \right)\\
&+\xi^2(x)\kse^2_{2,r}[\sfd,\Omega](x)+\sfd^2(x)\kse^2_{2,r}[\xi,\Omega](x)\\
&+\frac{\xi(x)\sfd(x)}{2}\left( \kse^2_{2,r}[\xi+\sfd,\Omega](x)-\kse^2_{2,r}[\xi-\sfd,\Omega](x)  \right)+\fint_{B_r(x)}\,\frac{\Cub(x,y)}{r^2}\,\d \mm(y),
\end{align*}
where $\Cub(x,y)$ denotes the cubic terms in \eqref{eq:energy51}.
In the same manner as in the proof of Lemma \ref{lem:energy1},
we see
\begin{equation*}
\fint_{B_r(x)}\,\frac{\Cub(x,y)}{r^2}\,\d \mm(y) \to 0 \quad \text{$\mm$-a.e. $x\in \Omega$}
\end{equation*}
as $r\downarrow 0$.
By virtue of Theorem \ref{thm:openenerygy},
we obtain
\begin{align*}
\e^2_{2}[u_{\eta}]&\leq \frac{\sin^2\xi \sfd}{\sin^2 \sfd }  \left( \e^2_{2}[u]-\e^2_{2}[\sfd] \right)+\xi^2 \e^2_{2}[\sfd]+\sfd^2\e^2_{2}[\xi]+\frac{\xi \sfd}{2}\left( \e^2_{2}[\xi+\sfd]-\e^2_{2}[\xi-\sfd]  \right)\quad \text{$\mm$-a.e. on $\Omega$}.
\end{align*}

Now,
it suffices to prove that
\begin{equation*}
\xi^2 \e^2_{2}[\sfd]+\sfd^2\e^2_{2}[\xi]+\frac{\xi \sfd}{2}\left( \e^2_{2}[\xi+\sfd]-\e^2_{2}[\xi-\sfd]  \right)=\e^2_2[\xi \sfd] \quad \text{$\mm$-a.e. on $\Omega$}.
\end{equation*}
Thanks to Proposition \ref{thm:consR},
for any $u\in \ks^{1,2}(\Omega,\mathbb{R})$,
we have $|D u|=c_d\, \e_2[u]$ $\mm$-a.e. on $\Omega$,
and hence it is enough to show that
\begin{equation}\label{eq:energy2suf}
\xi^2 |D \sfd|^2+\sfd^2|D \xi|^2+\frac{\xi \sfd}{2}\left( |D(\xi+\sfd)|^2-|D(\xi-\sfd)|^2  \right)=|D(\xi \sfd)|^2 \quad \text{$\mm$-a.e. on $\Omega$}.
\end{equation}
Since $(\X,\sfd,\mm)$ is infinitesimally Hilbertian,
Theorem \ref{thm:IH} tells us that
\begin{equation}\label{eq:energy2lei}
|D(\xi \sfd)|^2=\langle \nabla (\xi \sfd),\nabla(\xi \sfd) \rangle=\xi^2 |D \sfd|^2+\sfd^2|\nabla \xi|^2+2\xi \sfd\langle \nabla \xi,\nabla \sfd \rangle\quad \text{$\mm$-a.e. on $\Omega$},
\end{equation}
where we used the Leibniz rule.
Furthermore,
by the linearity of the gradient,
\begin{equation}\label{eq:energy2lin}
|D(\xi+\sfd)|^2-|D(\xi-\sfd)|^2=\langle \nabla (\xi +\sfd),\nabla(\xi +\sfd) \rangle-\langle \nabla (\xi -\sfd),\nabla(\xi -\sfd) \rangle=4\langle \nabla \xi,\nabla \sfd \rangle \quad \text{$\mm$-a.e. on $\Omega$}.
\end{equation}
Combining \eqref{eq:energy2lei} and \eqref{eq:energy2lin} yields
\begin{equation*}
|D(\xi \sfd)|^2=\xi^2 |D \sfd|^2+\sfd^2|D \xi|^2+\frac{\xi \sfd}{2}\left( |D(\xi+\sfd)|^2-|D(\xi-\sfd)|^2  \right)\quad \text{$\mm$-a.e. on $\Omega$},
\end{equation*}
and this is nothing but \eqref{eq:energy2suf},
thus concluding the proof.
\end{proof}

We now prove that the energy $\E^{\Omega}_{2}$ satisfies a certain kind of convexity (cf. \cite[Proposition 1.15]{GT2}):
\begin{prop}\label{prop:energy3}
Assume that
$(\X,\sfd,\mm)$ is infinitesimally Hilbertian.
For $u,v\in \ks^{1,2}(\Omega,\bar{\B}_{\rho}(\o))$,
we set $m:=\G^{u,v}_{1/2}\in \ks^{1,2}(\Omega,\bar{\B}_{\rho}(\o))$ and $\sfd:=\sfd_\Y(u,v),\dfr:=\sfd_{\Y}(m,\o)\in \ks^{1,2}(\Omega,\mathbb{R})$.
Let $\eta \in \ks^{1,2}(\Omega,[0,1])$ be a function determined by solving
\begin{equation}\label{eq:intermediate}
\frac{\sin(1-\eta)\dfr}{\sin \dfr}=\cos\frac{\sfd}{2},
\end{equation}
and define $m_\eta:=\G^{m,\o}_{\eta}\in \ks^{1,2}(\Omega,\bar{\B}_{\rho}(\o))$.
Then we have
\begin{equation*}
\E^{\Omega}_{2}(m_{\eta})+\cos^8 \rho \,\E^{\Omega}_{2}\left(\frac{\tan \frac{\sfd}{2}}{\cos \dfr}\right)\leq \frac{1}{2}\E^{\Omega}_{2}(u)+\frac{1}{2}\E^{\Omega}_{2}(v).
\end{equation*}
\end{prop}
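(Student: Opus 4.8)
The plan is to feed Lemma~\ref{lem:energy2}, applied to the midpoint map $m$, into Lemma~\ref{lem:energy1}; the implicit equation~\eqref{eq:intermediate} is exactly the reparametrisation that makes the two estimates combine, with the extra term $\cos^{8}\rho\,\E^{\Omega}_{2}\!\bigl(\tan\tfrac{\sfd}{2}/\cos\dfr\bigr)$ produced for free.

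\emph{Bookkeeping.} Since $\rho<\pi/2=D_{1}/2$, Proposition~\ref{prop:cat}(2) makes $\bar{\B}_{\rho}(\o)$ convex, so $m=\G^{u,v}_{1/2}$ is $\bar{\B}_{\rho}(\o)$-valued; hence $\sfd\in[0,2\rho]$ and $\dfr\in[0,\rho]$ $\mm$-a.e., while $m\in\ks^{1,2}(\Omega,\bar{\B}_{\rho}(\o))$ and $\sfd,\dfr\in\ks^{1,2}(\Omega,\mathbb{R})$ by Lemma~\ref{lem:energy1}. Equation~\eqref{eq:intermediate} is solved by the $\eta$ for which $(1-\eta)\dfr=\arcsin\!\bigl(\cos\tfrac{\sfd}{2}\sin\dfr\bigr)=:\psi$, i.e.\ $\eta=1-\psi/\dfr$; since $(s,d)\mapsto\arcsin(\cos\tfrac{s}{2}\sin d)/d$ extends to a smooth --- hence Lipschitz --- function on $[0,2\rho]\times[0,\rho]$ with values in $(0,1]$, $\eta$ is a Lipschitz function of $(\sfd,\dfr)\in\ks^{1,2}(\Omega,\mathbb{R})$, so $\eta\in\ks^{1,2}(\Omega,[0,1])$. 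Likewise $\phi:=\tan\tfrac{\sfd}{2}/\cos\dfr\in\ks^{1,2}(\Omega,\mathbb{R})$, and $m_{\eta}=\G^{m,\o}_{\eta}\in\ks^{1,2}(\Omega,\bar{\B}_{\rho}(\o))$ by Lemma~\ref{lem:energy2} applied with $u$ replaced by $m$ (so that the role of $\sfd$ there is played by $\dfr$).

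\emph{Combining the lemmas.} That instance of Lemma~\ref{lem:energy2}, together with $\sin^{2}(1-\eta)\dfr/\sin^{2}\dfr=\cos^{2}\tfrac{\sfd}{2}$ and $(1-\eta)\dfr=\psi$ from~\eqref{eq:intermediate}, and then Lemma~\ref{lem:energy1}, give $\mm$-a.e.\ on $\Omega$
\[
\e^{2}_{2}[m_{\eta}]\ \le\ \cos^{2}\tfrac{\sfd}{2}\bigl(\e^{2}_{2}[m]-\e^{2}_{2}[\dfr]\bigr)+\e^{2}_{2}[\psi]\ \le\ \tfrac12\bigl(\e^{2}_{2}[u]+\e^{2}_{2}[v]\bigr)-\tfrac14\e^{2}_{2}[\sfd]-\cos^{2}\tfrac{\sfd}{2}\,\e^{2}_{2}[\dfr]+\e^{2}_{2}[\psi].
\]
By the representation $\E^{\Omega}_{2}(\cdot)=\int_{\Omega}\e^{2}_{2}[\cdot]\,\d\mm$ of Theorem~\ref{thm:openenerygy}(2), it then suffices to prove the pointwise bound
\[
\e^{2}_{2}[\psi]+\cos^{8}\rho\,\e^{2}_{2}[\phi]\ \le\ \tfrac14\e^{2}_{2}[\sfd]+\cos^{2}\tfrac{\sfd}{2}\,\e^{2}_{2}[\dfr]\qquad\mm\text{-a.e.\ on }\Omega,
\]
since substituting it into the previous display and integrating over $\Omega$ yields $\E^{\Omega}_{2}(m_{\eta})+\cos^{8}\rho\,\E^{\Omega}_{2}(\phi)\le\tfrac12\E^{\Omega}_{2}(u)+\tfrac12\E^{\Omega}_{2}(v)$.

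\emph{The pointwise inequality.} Here the infinitesimal Hilbertianity of $\X$ enters. By Proposition~\ref{thm:consR} every real energy density equals $c_{d}|D\cdot|$, so by the chain and Leibniz rules (Theorems~\ref{thm:gradchain} and~\ref{thm:IH}) one has $\nabla\psi=\psi_{a}\nabla\tfrac{\sfd}{2}+\psi_{b}\nabla\dfr$ and $\nabla\phi=\phi_{a}\nabla\tfrac{\sfd}{2}+\phi_{b}\nabla\dfr$, where $\psi_{a}=-\sin\tfrac{\sfd}{2}\sin\dfr/\cos\psi$, $\psi_{b}=\cos\tfrac{\sfd}{2}\cos\dfr/\cos\psi$, $\phi_{a}=1/(\cos^{2}\tfrac{\sfd}{2}\cos\dfr)$, $\phi_{b}=\sin\tfrac{\sfd}{2}\sin\dfr/(\cos\tfrac{\sfd}{2}\cos^{2}\dfr)$ and $\cos^{2}\psi=1-\cos^{2}\tfrac{\sfd}{2}\sin^{2}\dfr$. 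Expanding $|\nabla\psi|^{2}$ and $|\nabla\phi|^{2}$ via the pointwise scalar product and using Cauchy--Schwarz $|\langle\nabla\tfrac{\sfd}{2},\nabla\dfr\rangle|\le|D\tfrac{\sfd}{2}|\,|D\dfr|$, the desired bound follows once the symmetric matrix
\[
\begin{pmatrix}
1-\psi_{a}^{2}-\cos^{8}\rho\,\phi_{a}^{2} & -(\psi_{a}\psi_{b}+\cos^{8}\rho\,\phi_{a}\phi_{b})\\
-(\psi_{a}\psi_{b}+\cos^{8}\rho\,\phi_{a}\phi_{b}) & \cos^{2}\tfrac{\sfd}{2}-\psi_{b}^{2}-\cos^{8}\rho\,\phi_{b}^{2}
\end{pmatrix}
\]
is shown to be positive semidefinite. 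A direct computation gives that its determinant is identically $0$, while each diagonal entry is nonnegative exactly when $\cos^{8}\rho\,\cos^{2}\psi\le\cos^{4}\tfrac{\sfd}{2}\cos^{4}\dfr$ --- which holds because $\tfrac{\sfd}{2}\le\rho$ and $\dfr\le\rho$ force $\cos^{4}\tfrac{\sfd}{2}\cos^{4}\dfr\ge\cos^{8}\rho\ge\cos^{8}\rho\,\cos^{2}\psi$; the locus $\sin\tfrac{\sfd}{2}\sin\dfr=0$ is treated by direct inspection. This elementary but delicate verification --- the $\RCD$ incarnation of combining Serbinowski's two pointwise estimates (Lemmas~\ref{lem:cos1} and~\ref{lem:cos2}) --- is the crux, and is precisely where the normalisation~\eqref{eq:intermediate} and the exponent $8$ in $\cos^{8}\rho$ are needed. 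Substituting this bound back and integrating over $\Omega$ completes the proof.
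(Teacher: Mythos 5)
Your proposal is correct and follows essentially the same route as the paper: combine Lemma \ref{lem:energy2} (applied to $m$) with Lemma \ref{lem:energy1} via the reparametrisation \eqref{eq:intermediate}, handle the remaining scalar terms using infinitesimal Hilbertianity (chain and Leibniz rules, Proposition \ref{thm:consR}), and finish with $\cos^4\tfrac{\sfd}{2}\,\cos^4\dfr\geq\cos^8\rho$. The only difference is presentational: where the paper records the exact identity \eqref{eq:energy32} with the sharp coefficient $\cos^4\tfrac{\sfd}{2}\cos^4\dfr/(1-\cos^2\tfrac{\sfd}{2}\sin^2\dfr)$ and then bounds that coefficient below by $\cos^8\rho$, you prove the resulting inequality directly via Cauchy--Schwarz and a positive semidefinite $2\times2$ matrix whose vanishing determinant is precisely that identity in disguise, thereby supplying the ``straightforward calculation'' the paper leaves implicit.
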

\begin{proof}
Combining Lemmas \ref{lem:energy1} and \ref{lem:energy2},
we have
\begin{align}\label{eq:energy31}
\e^2_2[m_{\eta}]&\leq \frac{\sin^2 (1-\eta)\dfr}{\sin^2 \dfr}(\e^2_2[m]-\e^2_2[\dfr])+\e^2_2[(1-\eta)\dfr]\\ \notag
&=\cos^2 \frac{\sfd}{2}(\e^2_2[m]-\e^2_2[\dfr])+\e^2_2[(1-\eta)\dfr]\\ \notag
&\leq \frac{1}{2}\left( \e^2_2[u]+\e^2_{2}[v] \right)-\frac{1}{4}\e^2_2[\sfd]-\cos^2 \frac{\sfd}{2}\,\e^2_2[\dfr]+\e^2_2[(1-\eta)\dfr] \quad \text{$\mm$-a.e. on $\Omega$.}
\end{align}

We now observe that the following holds:
\begin{equation}\label{eq:energy32}
\e^2_2[(1-\eta)\dfr]+\frac{\cos^4 \frac{\sfd}{2}\,\cos^4 \dfr}{1-\cos^2 \frac{\sfd}{2}\,\sin^2 \dfr}\,\e^2_2 \left[\frac{\tan \frac{\sfd}{2}}{ \cos \dfr} \right]=\cos^2 \frac{\sfd}{2}\,\e^2_2[\dfr]+\frac{1}{4}\e^2_2[\sfd] \quad \text{$\mm$-a.e. on $\Omega$.}
\end{equation}
In view of Proposition \ref{thm:consR},
this is equivalent to the following:
\begin{equation*}
|D((1-\eta)\dfr)|^2+\frac{\cos^4 \frac{\sfd}{2}\,\cos^4 \dfr}{1-\cos^2 \frac{\sfd}{2}\,\sin^2 \dfr}\,\left|D\left(\frac{\tan \frac{\sfd}{2}}{ \cos \dfr}\right)\right|^2=\cos^2 \frac{\sfd}{2}\,|D\dfr|^2+\frac{1}{4}|D \sfd|^2 \quad \text{$\mm$-a.e. on $\Omega$.}
\end{equation*}
Since $(\X,\sfd,\mm)$ is infinitesimally Hilbertian,
it can be derived from Theorems \ref{thm:gradchain}, \ref{thm:IH} (chain rule, Leibniz rule), the definition of $\eta$, and a straightforward calculation.
Combining \eqref{eq:energy31} and \eqref{eq:energy32},
we obtain
\begin{align*}
\e^2_2[m_{\eta}]+\cos^8 \rho\,\e^2_2 \left[\frac{\tan \frac{\sfd}{2}}{ \cos \dfr} \right]
&\leq \e^2_2[m_{\eta}]+\cos^4 \frac{\sfd}{2}\,\cos^4 \dfr\,\e^2_2 \left[\frac{\tan \frac{\sfd}{2}}{ \cos \dfr} \right]\\
&\leq \e^2_2[m_{\eta}]+\frac{\cos^4 \frac{\sfd}{2}\,\cos^4 \dfr}{1-\cos^2 \frac{\sfd}{2}\,\sin^2 \dfr}\,\e^2_2 \left[\frac{\tan \frac{\sfd}{2}}{ \cos \dfr} \right]\\
&\leq \frac{1}{2}\left( \e^2_2[u]+\e^2_{2}[v] \right) \quad \text{$\mm$-a.e. on $\Omega$.}
\end{align*}
By integrating this inequality over $\Omega$,
we obtain the desired one.
\end{proof}

\subsection{Proof of Theorem \ref{thm:main}}\label{sec:pf}

We are now in a position to prove Theorem \ref{thm:main}.
\begin{proof}[Proof of Theorem \ref{thm:main}]
Let $(\X,\sfd,\mm)$ be an infinitesimally Hilbertian, locally uniformly doubling, strongly rectifiable space satisfying a Poincar\'e inequality,
and let $\Omega$ be a bounded open subset of $\X$ with $\mm(\X\setminus \Omega)>0$.
Let $\Y_{\o}=(\Y,\sfd_\Y,\o)$ be a pointed $\CAT(1)$ space,
and let $\bar{\B}_{\rho}(\o)$ be a regular ball.
Let $(u_n)_n\subset \ks^{1,2}_{\bar u}(\Omega,\bar{\B}_{\rho}(\o))$ stand for a minimizing sequence of $\E^{\Omega}_{2,\bar{u}}$.
By Proposition \ref{prop:Dirichlet},
the functional $\E_{2,\bar u}^\Omega$ is lower semicontinuous,
and hence
it is sufficient to show that $(u_n)_n$ is an $L^2(\Omega,\bar{\B}_{\rho}(\o))$-Cauchy sequence.

Set $I:=\lim_n \E_{2,\bar u}^\Omega(u_n)=\inf  \E_{2,\bar u}^\Omega$.
We also set
\begin{equation*}
m_{n,m}:=\G^{u_n,u_m}_{1/2},\quad \sfd_{n,m}:=\sfd_\Y(u_n,u_m),\quad \dfr_{n,m}:=\sfd_\Y(m_{n,m},\o),\quad m_{n,m,\eta}:=\G^{m_{n,m},\o}_{\eta_{n,m}},
\end{equation*}
where $\eta_{n,m}\in \ks^{1,2}(\Omega,[0,1])$ is defined as \eqref{eq:intermediate}.
Proposition \ref{prop:energy3} yields
\begin{align*}
\cos^8 \rho \,\E^{\Omega}_{2,\bar{u}}\left(\frac{\tan \frac{\sfd_{n,m}}{2}}{\cos \dfr_{n,m}}\right)&\leq \frac{1}{2}\E^{\Omega}_{2,\bar{u}}(u_{n})+\frac{1}{2}\E^{\Omega}_{2,\bar{u}}(u_m)-\E^{\Omega}_{2,\bar{u}}(m_{n,m,\eta})\\
&\leq \frac{1}{2}\E^{\Omega}_{2,\bar{u}}(u_{n})+\frac{1}{2}\E^{\Omega}_{2,\bar{u}}(u_m)-I;
\end{align*}
in particular,
\begin{equation*}
\lims_{n,m\to\infty}\int_\Omega \left|D\left(\frac{\tan \frac{\sfd_{n,m}}{2}}{\cos \dfr_{n,m}}\right) \right|^2\,\d\mm=0.
\end{equation*}
Furthermore,
a Poincar\'e inequality under Dirichlet boundary conditions (see e.g., \cite[Lemma 6.3]{GT2}, \cite[Subsection 5.5]{BB}) together with Proposition \ref{prop:Dirichlet} leads us to
\begin{equation*}
\lim_{n,m\to\infty}\int_\Omega \left|\frac{\tan \frac{\sfd_{n,m}}{2}}{\cos \dfr_{n,m}} \right|^2\,\d\mm=0.
\end{equation*}
Thus,
$\lim_{n,m\to\infty}\int_\Omega |\sfd_{n,m}|^2\,\d\mm=0$ since $|\sfd_{n,m}|^2 \leq 4|\tan(\sfd_{n,m}/2)|^2$ and $|\cos \dfr_{n,m}|^2\leq 1$ over $\Omega$.
This concludes the proof.
\end{proof}

\subsection*{{\rm Acknowledgements}} 
The author is grateful to Keita Kunikawa for fruitful discussions during this work.
The author expresses his gratitude to Shouhei Honda for valuable comments.
The author would like to thank the anonymous referee for useful comments.
The author was supported by JSPS KAKENHI (JP23K12967).


\end{document}